\documentclass[sn-mathphys-num]{sn-jnl}

\usepackage[T1]{fontenc}
\usepackage{amsmath}
\usepackage{amssymb}
\usepackage{graphicx,booktabs}
\usepackage{times}
\usepackage{enumitem}

\usepackage{algorithm}
\usepackage{algpseudocode}

\usepackage[utf8]{inputenc}
\usepackage[english]{babel}
\usepackage{xcolor}
\usepackage{soul}
\usepackage{cleveref}
\crefformat{equation}{(#2#1#3)}

\def\BA{{\bf A}}
\def\BAT{{\bf A}^{\!\!\top}}
\def\RR{\mathbb{R}}
\def\S{\mathbb{S}}

\def\BM{{\bf M}}

\def\tn{\mathop{{\widetilde{n}}}}

\newcommand{\bo}[1]{\mathbf{#1}}

\DeclareMathOperator{\rank}{rank}
\DeclareMathOperator{\svec}{svec}
\DeclareMathOperator{\smat}{smat}
\DeclareMathOperator{\trace}{trace}
\DeclareMathOperator{\Diag}{Diag}

\jyear{2021}%

\theoremstyle{thmstyleone}%
\newtheorem{theorem}{Theorem}
%
\newtheorem{lemma}[theorem]{Lemma}%

\theoremstyle{thmstyletwo}%
\newtheorem{Example}{Example}%
\newtheorem{remark}{Remark}%

\theoremstyle{thmstylethree}%
\newtheorem{definition}{Definition}%
\newtheorem{Assumption}{Assumption}%

\raggedbottom

\begin{document}

\title[Numerical solution of SDP relaxations of unconstrained 
BQP ]{On the numerical solution of  Lasserre relaxations of unconstrained 
binary quadratic optimization problem}

\author[1]{\fnm{Soodeh} \sur{Habibi}}\email{s.habibi@liverpool.ac.uk}
\author*[2,3]{\fnm{Michal} \sur{Ko\v{c}vara}}\email{m.kocvara@bham.ac.uk}
\author[4]{\fnm{Michael} \sur{Stingl}}\email{stingl@math.fau.de}
\affil[1]{\orgdiv{Department of Electrical Engineering and Electronics}, \orgname{University of Liverpool}, \orgaddress{\street{Brownlow Hill}, \city{Liverpool}, \postcode{L69 3GJ}, \country{UK}}}
\affil[2]{\orgdiv{School of Mathematics}, \orgname{University of Birmingham}, \orgaddress{\street{Edgbaston}, \city{Birmingham}, \postcode{B15 2TT}, \country{UK}}}
\affil[3]{\orgdiv{Faculty of Civil Engineering}, \orgname{ Czech Technical University in Prague}, \orgaddress{\street{Thákurova 2077/7}, \city{Prague 6}, \postcode{166 29},  \country{Czech Republic}}}
\affil[4]{\orgdiv{Applied Mathematics}, \orgname{Friedrich-Alexander-Universität Erlangen-Nürnberg}, \orgaddress{\street{Cauerstraße 11}, \city{Erlangen}, \postcode{91058}, \country{Germany}}}

\abstract{
    The aim of this paper is to solve linear semidefinite programs arising from higher-order Lasserre relaxations of unconstrained binary quadratic optimization problems. For this we use an interior point method with a preconditioned conjugate gradient method solving the linear systems. The preconditioner utilizes the low-rank structure of the solution of the relaxations. In order to fully exploit this, we need to re-write the moment relaxations. To treat the arising linear equality constraints we use an $\ell_1$-penalty approach within the interior-point solver.
    The efficiency of this approach is demonstrated by numerical experiments with the MAXCUT and other randomly generated problems and a comparison with a state-of-the-art semidefinite solver and the ADMM method. We further propose a hybrid ADMM-interior-point method that proves to be efficient for certain problem classes. As a by-product, we observe that the second-order relaxation is often high enough to deliver a globally optimal solution of the original problem.
}

\keywords{
Binary quadratic optimization, Lasserre hierarchy, semidefinite optimization, interior-point methods, preconditioned conjugate gradients, MAXCUT problem.
}
\pacs[MSC Classification]{
90C22, 90C51, 65F08, 74P05
}

\maketitle

\section{Introduction}\label{sec:intro}
Unconstrained binary quadratic optimization problems (UBQP) represent a surprisingly wide class of important optimization problems; see, e.g., the comprehensive overview \cite{kochenberger2014unconstrained}. The famous MAXCUT problem is a typical representative of this class. It is thus not surprising that they attract a great deal of attention among algorithm and software developers. Some of the most efficient algorithms for finding a global solution of UBQP combine branch-and-bound techniques with semidefinite programming (SDP) relaxations to obtain good lower bounds. These are represented by software like Biq Mac \cite{rendl2007branch}, BiqCrunch \cite{krislock2017biqcrunch} and BiqBin \cite{gusmeroli2022biqbin}.

The SDP relaxations are typically based on Shor's relaxation \cite{ben-tal-nemirovski} which is equivalent to the first-order Lasserre relaxation \cite{lasserre}. While higher-order relaxations would deliver much tighter lower bounds (if not exact solutions), the dimensions of the arising SDP problems are considered prohibitively large already for medium-sized UBQPs. For this reason, many authors proposed various techniques to strengthen the first-order relaxations; see, e.g., \cite{gvozdenovic2008operator,ghaddar2016dynamic,campos2022partial}
and the references therein.

It is not our goal to compete with the software mentioned in the first paragraph but to offer ways for potentially increasing their efficiency. In particular, our goal is to show that (at least) the second-order Lasserre relaxations are solvable by a specialized SDP software. We will also demonstrate that the second-order relaxations are, indeed, superior to the first-order ones and, in many cases, already deliver global solutions of the UBQP. For these problem we thus also provide an alternative approach to the branch-and-bound method, an approach of known complexity.

In this paper, we consider UBQP
\begin{equation}\label{eq:BQP}
\min_{x\in\RR^{s}} x^\top Q x \quad
\mbox{subject to}
\quad x_i \in {\cal B} \,,\quad i=1,\ldots,s
\end{equation}
with a symmetric matrix $Q\in\RR^{{s}\times{s}}$, where ${\cal B}$ is either the set $\{0,1\}$ or the set $\{-1,1\}$. \emph{We do not assume any sparsity in $Q$}, it is a generally dense matrix. In order to find a global optimum, we use Lasserre hierarchy of SDP problems---relaxations---of growing dimension \cite{lasserre}. The SDP relaxations have the form
\begin{align}
\label{eq:SDP}
&\min_{y\in\RR^n} q^\top y\\
&\mbox{subject to}~\begin{aligned}[t]&M(y):=\sum_{i=1}^{n}y_iM_i-M_0\succeq 0\,.
\end{aligned}\nonumber
\end{align}
Here $M$ is a so-called moment matrix, a (generally) dense matrix of a very specific form. In particular, if the solution of \cref{eq:BQP} is unique and the order of the relaxation is big enough, then $\rank M(y^*) \in \{1, 2\}$, where $y^*$ is a solution of \cref{eq:SDP} (see \Cref{sec:relax} below).

These problems are known to be difficult to solve due to the quickly growing dimension of the problem with the order of the relaxation; see, e.g., \cite{kim-kojima}. Since we do not assume any sparsity in $Q$, we cannot use sparse techniques, such as those proposed in \cite{sparsePOP,victor1,victor2}. Instead, we propose to use our recently developed software \mbox{\emph{Loraine}}~\cite{Loraine}. Loraine uses a primal-dual predictor-corrector interior-point (IP) method together with an iterative algorithm for the solution of the resulting linear systems. The iterative solver is a preconditioned Krylov-type method with a preconditioner utilizing low rank of the solution.

Several authors observed and confirmed by numerical experiments that SDP reformulation of UBQP (the first-order relaxation) can be efficiently solved by an SDP variant of the ADMM method, see \cite{wen2010alternating}. We will show that not only this observation can be extended to higher-order relaxations but that inexact ADMM results can be efficiently used as a warm start for the IP algorithm in Loraine. This is due to the choice of the preconditioner used within the iterative solver in Loraine. Based on this observation, we will propose a new, \emph{hybrid ADMM-Loraine algorithm} that, for a certain class of problems, will be superior to the single algorithms.

The paper is organized as follows. 
Section~2 introduces the low-rank SDP solver Loraine and the assumptions needed for its efficiency. 
In Section~3 we discuss the various forms of SDP relaxations of UBQP. Then, in Section~4 we briefly describe the ADMM algorithm for SDP and introduce the new hybrid algorithm, using an inexact ADMM result as a warm start for Loraine. The last Section~5 is devoted to numerical experiments using instances of the MAXCUT problems and randomly generated UBQPs.

\subsection{Our contribution}

We present a reformulation of \cref{eq:SDP} into a structure suitable for our IP solver Loraine. This reformulation uses an $\ell_1$-penalty function and leads to a low-rank solution of the form needed by the preconditioned iterative solver in Loraine. As a result, we can solve much larger problems than standard SDP solvers applied to formulation \cref{eq:SDP}.

We propose a novel ADMM-warm-started interior-point method. For a warm start to be beneficial for IP, it has to provide an approximation of both, primal and dual solution. This is obtained by ADMM with a very low accuracy stopping criterion. Moreover, ADMM also gives a very good estimate of the rank of the solution, thus making the preconditioner in Loraine very efficient.

In our numerical experiments, we not only demonstrate the efficiency of the proposed approach, we also confirm the observation in \cite{kim-kojima} that second-order relaxations are, in many cases, sufficient to obtain an exact solution of UBQP.

\subsection{Notation}
We denote by $\S^m,\S^m_+$ and $\S^m_{++}$, respectively, the space of $m\times m$ symmetric matrices, positive semidefinite and positive definite matrices.
The notation ``$\svec$'' and ``$\smat$" refer to the symmetrized vectorization and its inverse operation, respectively. 
The symbol $\bullet$ denotes the Frobenius inner product of two matrices, $A \bullet B=\trace(A^{\!\top} \!B)$.
Finally, the notation $e_n$ (or just $e$) is used for the vector of all ones.

\section{The solver Loraine}
Loraine\footnote{github.com/kocvara/Loraine.m} is a general-purpose solver for any linear SDP developed by the authors and implemented in \mbox{MATLAB} and Julia. Compared to other general-purpose SDP software, it particularly targets at problems with low-rank solutions. To solve the arising systems of linear equations, it uses the preconditioned conjugate gradient method, as described in detail in~\cite{Loraine}. The preconditioner, introduced in \cite{Zhang_2017}, is based on the assumption that the solution matrix has a small number of outlying eigenvalues. The user can choose between the direct and iterative solver, the type of preconditioner and the expected rank of the solution.

Loraine was developed for problems of the type
\begin{align}
\label{eq:SDP-P}
&\min_{y\in\RR^n\!,\,S\in\S^m\!,\,s_{\textrm{lin}}\in\RR^\nu~} b^\top y\\
&\mbox{subject to}~\begin{aligned}[t]&\sum_{i=1}^{n}y_iA_i+S=C\\
&Dy+s_{\textrm{lin}}=d\\
&S\succeq 0,\ 
s_{\textrm{lin}}\geq 0
\end{aligned}\nonumber
\end{align}
with the Lagrangian dual
\begin{align}
\label{eq:SDP-D}
&\max_{X\in\S^m\!,\,x_{\textrm{lin}}\in\RR^\nu~} C \bullet X +d^\top x_{\textrm{lin}}\\
&\mbox{{subject to}}\ \begin{aligned}[t]&A_i \bullet X+ (D^\top x_{\textrm{lin}})_i=b_i, \quad i=1,\dots,n\\
&X\succeq 0,\  x_{\textrm{lin}}\geq 0\,.
\end{aligned}\nonumber
\end{align}
In the following, we call \cref{eq:SDP-P} a problem of \emph{primal form} and \cref{eq:SDP-D} a problem of \emph{dual form}.

Loraine is efficient under the following assumptions:
\begin{Assumption}\label{assum:a2}
	Problems \cref{eq:SDP-P},\cref{eq:SDP-D} are strictly feasible, i.e., there exist $X\in\S^m_{++}$, $x_{\textrm{lin}}\in\RR^\nu_{++}$, $y\in\RR^n$, $S\in\S^m_{++}$, $s_{\textrm{lin}}\in\RR^\nu_{++}$, such that $A_i \bullet X + (D^\top x_{\textrm{lin}})_i=b_i $, $\sum_{i=1}^{n}y_iA_i+S=C$ and $Dy+s_{\textrm{lin}}=d$ (Slater's condition).
\end{Assumption}
\begin{Assumption}\label{assum:a3}
	Define the matrix $\BA=[\svec A_1,\dots, \svec A_n]$. We assume that any matrix-vector products with $\BA$ and $\BAT $ may each be computed in $\mathcal{O}(n)$ flops and memory.
\end{Assumption}
\begin{Assumption}\label{assum:a4} The inverse $(D^\top D)^{-1}$ exists and $(D^\top D)^{-1}$ together with the matrix-vector product with $(D^\top D)^{-1}$ may each be computed in $\mathcal{O}(n)$ flops and memory.
\end{Assumption}
\begin{Assumption}\label{assum:a5} The dimension of $X$ is much smaller than the number of constraints in \cref{eq:SDP-D}, i.e., \mbox{$m\ll n$}.
\end{Assumption}
\begin{Assumption}\label{assum:alr}
	Let $X^*$ be the solution of \cref{eq:SDP-D}. We assume that $X^*$ has $k$ outlying eigenvalues, i.e., that
	$$
	(0\leq)\;\lambda_1(X^*)\leq\cdots\leq\lambda(X^*)_{m-k}\ll \lambda(X^*)_{m-k+1}\leq\cdots\leq\lambda(X^*)_{m}\,,
	$$
	where $k$ is very small, typically smaller than 10 and, often, equal to 1.
	This includes the particular case when the rank of $X^*$ is very small.
\end{Assumption}

The last Assumption~\ref{assum:alr} is not satisfied by problem \cref{eq:SDP} so, in the next section, we will:
\begin{itemize}
	\item[(i)] re-write SDP relaxation \cref{eq:SDP} in the dual form \cref{eq:SDP-D} by introducing auxiliary variables and additional linear equality constraints;
	\item[(ii)] treat the new linear equality constraints by $\ell_1$-penalty approach, in order to replace equalities by inequalities, as required by the interior-point algorithm;
	\item[(iii)] show that the matrix associated with the new linear inequality constraints is block diagonal with blocks of very small size and thus satisfies \Cref{assum:a3};
	\item[(iv)] show that \Cref{assum:a5} is naturally satisfied for the re-written problem.
\end{itemize}

\section{Forms of SDP relaxations}\label{sec:relax}
\subsection{Lasserre hierarchy of moment problems}
Let ${\cal B} = \{-1,1\}$. By introducing a matrix variable $X = x x^\top$,
problem \cref{eq:BQP} can be equivalently written as the following SDP problem with a rank constraint:
\begin{align}
\label{eq:BQPSDP}
&\min_{X\in\S^{s}} {Q}\bullet X\\
&\mbox{subject to}~
\begin{aligned}[t]&X_{i,i} = 1,\quad i=1,\ldots,s\\
& X\succeq 0\\
&\rank X = 1\,.
\end{aligned}\nonumber
\end{align}

\begin{remark}
	For ${\cal B} = \{0,1\}$, we first use the substitution $\tilde{x} = 2x-1$ to get a problem with objective function
	$\tilde{x}^\top Q\tilde{x}+2e^\top Q\tilde{x}$ and constraints $\tilde{x}_i\in\{-1,1\}$. Then we set 
	$\widetilde{Q} = \begin{pmatrix} 0 &e^\top Q  \\ Qe&Q\end{pmatrix}\!\in\S^{s+1}$, introduce a variable 
	$\widetilde{X}\in\S^{s+1}$, $\widetilde{X}= \begin{pmatrix}1\\ \tilde{x}\end{pmatrix}\begin{pmatrix}1\ \ \tilde{x}^\top \end{pmatrix}$ and solve problem \cref{eq:BQPSDP} with $Q,X$ replaced by $\widetilde{Q},\widetilde{X}$.
\end{remark}

In view of the above remark, \emph{in the rest of the paper we will only consider the case ${\cal B} = \{-1,1\}$.}

For $\omega\in\mathbb{N}$, $\omega\geq 1$, let
$
\mathbb{N}^s_\omega 
$
the set of multi-indices $\alpha\in\mathbb{N}^s$ with $\sum_{i=1}^s\alpha_i\leq \omega$ and $\alpha \not= \mathbf{0}_s$.

\begin{definition} Given an integer $\omega\geq 1$ and a sequence ${\bo y} =({\bo y}_\alpha)_{\alpha \in \mathbb{N}^s_{2\omega}}$, its \emph{{moment matrix of order $\omega$}} is the matrix ${\bo M}_\omega({\bo y})$
	indexed by $\mathbb{N}^s_{\omega}$ with $(\alpha,\beta)$th entry ${\bo y}_{\alpha+\beta}$ for $\alpha,\beta\in\mathbb{N}^n_\omega$, according to the graded inverse lexicographic order.
\end{definition}
Finally, we associate the unique (multi-indexed) elements ${\bo y}_\gamma$ of ${\bo M}_\omega({\bo y})$ with elements of a vector $y$. 

Hence, looking first at the first-order relaxation, we assign to each unique component of the matrix $X = x x^\top$ a variable $y_k$. Considering the constraints $X_{i,i} = 1$ and the symmetry of $X$, we thus have $y\in\RR^r$ with $r=s(s-1)/2$. Replacing the variable $X$ by $y$, the constraint $X\succeq 0$ can be written as 
$$
	{\bo M}_1(y):=\sum_{i=1}^r M_iy_i + I \succeq 0,
$$
where $M_i\in\S^{s}$ are suitable matrices; for details, see \cite{lasserre}.

\begin{Example} \label{ex:21}
	For instance, for $s=3$ we use the vector (monomial basis) $\begin{pmatrix}x_1,\; x_2,\; x_3\end{pmatrix}^{\!\top}$; then $r=3$ and
	\begin{align*}
	M_1 &= \begin{pmatrix}
	0&1&0\\1&0&0\\0&0&0
	\end{pmatrix},\ 
	M_2 = \begin{pmatrix}
	0&0&1\\0&0&0\\1&0&0
	\end{pmatrix},\ 
	M_3 = \begin{pmatrix}
	0&0&0\\0&0&1\\0&1&0
	\end{pmatrix}
	\end{align*}
	and the moment matrix ${\bo M}_1(y)$ is associated with the original variables as
	$$
	\left({\bo M}_1(y):=\right) \sum_{i=1}^r M_iy_i + I =\begin{pmatrix}
	1&y_1&y_2\\y_1&1&y_3\\y_2&y_3&1
	\end{pmatrix} \longleftrightarrow
	\begin{pmatrix}x_1\\x_2\\x_3\end{pmatrix}
	\begin{pmatrix}x_1\\x_2\\x_3\end{pmatrix}^{\!\!\!\top}=
	\begin{pmatrix}
	x_1^2&x_1x_2&x_1x_3\\x_1x_2&x_2^2&x_2x_3\\x_1x_3&x_2x_3&x_3^2
	\end{pmatrix}.
	$$
\end{Example}

By ignoring the rank constraint, we can now define the first-order relaxation (Shor's relaxation) of \cref{eq:BQPSDP} as the following \emph{moment problem}:
\begin{align}
\label{eq:BQPSDPrel1}
&\min_{y\in\RR^r} q^\top {y}\\
&\mbox{subject to}~
\begin{aligned}[t]&{\bo M}_1(y)\succeq 0\,,
\end{aligned}\nonumber
\end{align}
where ${q}=\svec{Q}$.

The Lasserre hierarchy consists in adding higher-order monomials in the monomial basis and, therefore, the higher-order relaxation problems \emph{will all be of the form}~\cref{eq:BQPSDPrel1} with ${\bo M}_1(y)$ replaced by ${\bo M}_\omega(y)$.

Now, using the constraint $x_i^2=1$, we do not need to add all higher-order monomials, only the mixed terms. This means that the size of ${\bo M}_\omega(y)$ will be equal to $\sum_{i=1}^\omega {s\choose i}$.
Furthermore, as some of the terms in the higher-order moment matrix are repeated and some can be simplified using the  constraint $x_i^2=1$ (e.g., $x_1x_2^2$ is reduced to $x_1$) and thus even more terms are repeated, the number of variables in the higher-order relaxations is smaller than the number of the elements of the upper triangle of the matrix (see also Example~\ref{ex:22} below).

\begin{Example}
	The second-order relaxation in \Cref{ex:21} will lead to a $(6\times 6)$ moment matrix associated with the dyadic product of the vector of monomials of order up to two $(x_1,x_2,x_3,x_1x_2,x_1x_3,x_2x_3)^{\!\top}$ with itself. As explained above, we excluded the monomials $x_1^2,x_2^2,x_3^2$ from the list. The total number of variables $y_i$ is 7, as shown in Example~\ref{ex:22} below.
\end{Example}

%
\begin{definition}
	The \emph{Lasserre relaxation of order $\omega$} of the problem \cref{eq:BQP} is given by problem~\cref{eq:BQPSDPrel1} with ${\bo M}_1(y)$ replaced by ${\bo M}_\omega(y)$ and
	with general dimensions $n,m$, replacing $r,s$, respectively; i.e., $y\in\RR^n$ and ${\bo M}_\omega(y)\in\S^m$.
\end{definition}
The dimensions $n,m$ grow quickly with the order of the relaxation. In particular, we have
\begin{equation}\label{eq:dim}
\frac{s^4}{24} \lessapprox n \leq 2^s - 1 \quad \mbox{and}\quad m = \sum_{i=1}^\omega {s\choose i}\,.
\end{equation}
(The authors are not aware of any exact formula for $n$ or the lower bound on $n$; the lower bound in \cref{eq:dim} is our estimate for $\omega=2$ based on numerical experiments. Obviously, the higher $\omega$ the bigger $n$ until it reaches the ``saturation point'' $2^s-1$.) 

The dimension of the problem makes it very challenging for standard SDP solvers based on second-order methods, such as interior-point methods; see \cite{kim-kojima} and the numerical examples in the last section of this paper.

It was shown by Fawzi et al.\ \cite{fawzi} and Sakaue et al.\ \cite{sakaue}, and numerically confirmed in \cite{kim-kojima}, that for this type of problems, the sequence of these approximations is \emph{finite} and the upper bound on the order of the relaxation to obtain {exact} solution of \cref{eq:BQP} is $\lceil s/2\rceil$. Laurent \cite{laurent} showed that this is also a lower bound for MAXCUT problems with unweighted complete graphs.

To determine whether a relaxation of order $\omega$ is exact, i.e., whether its solution is the solution of  \cref{eq:BQP}, we use the rank of the moment matrix; see \cite[Thm.\;6.19]{lasserre}. In particular, assuming that \cref{eq:BQP} has $p$ global minimizers, the rank of the moment matrix will be less than or equal to $p$. Therefore, if we assume that \cref{eq:BQP} has a unique solution, then the moment matrix of the exact relaxation will be of rank one. 

\begin{remark}
	Notice that, when ${\cal B}=\{-1,1\}$, every global minimizer $x$ will have a ``symmetric'' counterpart $-x$, as there are only quadratic terms in the problem. In this case, by unique minimizer, we understand ``unique up to the multiple by $-1$'' and the rank of the exact relaxation will be two.
\end{remark}

Above, we spoke about an ``exact relaxation'' and a ``rank of the moment matrix''. When solving the problem by an interior point method, we can only speak about a {numerical rank} defined below.
\begin{definition}
	Consider $A\in\S^m_+$ with eigenvalues $\lambda_i(A)$ and with the largest eigenvalue $\lambda_{\textrm{max}}(A)$. Let $\varepsilon>0$. The \emph{numerical rank} of $A$ is defined as
	$$
	r_\varepsilon(A) = \vert{\cal I}\vert,\quad {\cal I} = \left\{i\in\{1,\ldots,m\}\mid \frac{\lambda_i(A)}{\lambda_{\textrm{max}}(A)}\geq \varepsilon\right\}\,.
	$$
\end{definition}
\begin{definition}\label{def:ex}
	For problems with a unique solution and ${\cal B}=\{-1,1\}$, the relaxation of order $\omega$ is called \emph{exact} if the numerical rank of the corresponding moment matrix is less than or equal to two.
\end{definition}



\subsection{Re-writing the problem}
As mentioned in the Introduction, problem \cref{eq:BQPSDPrel1} is in the ``wrong'' form for our solver Loraine. While the solution $M^*:=M(y^*)$ of \cref{eq:BQPSDPrel1} has low rank (in particular, $\rank M^* = 2$ when the solution of \cref{eq:BQP} is unique), Loraine expects the solution of an SDP problem in the \emph{dual form} \cref{eq:SDP-D} to have low rank. Our next goal is thus to rewrite \cref{eq:BQPSDPrel1} in the dual form; in other words, to rewrite the dual to \cref{eq:BQPSDPrel1} in the \emph{primal form} \cref{eq:SDP-P}.

The dual to \cref{eq:BQPSDPrel1} is the problem
\begin{align*}
&\max_{Z\in \S^{m}} -I\bullet Z\\
&\mbox{subject to}~
\begin{aligned}[t]&M_i\bullet Z = {q}_i,\quad i=1,\ldots,n\\
&Z\succeq 0\,,
\end{aligned}\nonumber
\end{align*}
and, by vectorizing the matrices as $z=\svec Z$ and $\BM= (\svec M_1, \ldots,\svec M_n)^\top$, $\BM\in\RR^{n\times\tn}$, it can be further written as 
\begin{align}
\label{eq:BQPSDP2}
&\min_{z\in \RR^{\tn}} (\svec I)^\top z\\
&\mbox{subject to}~
\begin{aligned}[t]&\smat(z) \succeq 0\\
&\BM z = {q}
\end{aligned}\nonumber
\end{align}
with $\tn = m(m+1)/2$.
Now, from the fact that the problem \eqref{eq:BQPSDP2} is the dual problem to \eqref{eq:BQPSDPrel1}, we have the following lemma:
\begin{lemma}
	\label{lem:y1}
	The vector $y$ in \eqref{eq:BQPSDPrel1} is the Lagrangian  multiplier to $\BM z = {q}$ in \eqref{eq:BQPSDP2}.
\end{lemma}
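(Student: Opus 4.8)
The plan is to exhibit the duality explicitly and then read off the claimed identification from the structure of the Lagrangian. First I would set up the Lagrangian of problem \eqref{eq:BQPSDP2}, attaching a multiplier $y\in\RR^n$ to the affine equality constraint $\BM z = q$ and a symmetric positive semidefinite multiplier to the conic constraint $\smat(z)\succeq 0$. Concretely, writing the conic multiplier as $\smat(u)$ for some $u\in\RR^{\tn}$ with $\smat(u)\succeq 0$, the Lagrangian reads
\begin{equation*}
L(z,y,u) = (\svec I)^\top z - y^\top(\BM z - q) - u^\top z\,.
\end{equation*}
Grouping the terms linear in $z$, the stationarity condition $\nabla_z L = 0$ gives $\svec I - \BM^\top y - u = 0$, i.e.\ $u = \svec I - \BM^\top y$, and the remaining term is $y^\top q$.

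Next I would recognize that $\BM^\top y = \sum_{i=1}^n y_i\,\svec M_i = \svec\!\big(\sum_{i=1}^n y_i M_i\big)$, so that the condition $\smat(u)\succeq 0$ becomes precisely $I - \sum_{i=1}^n y_i M_i \preceq 0$, or equivalently $\sum_{i=1}^n y_i M_i - I \preceq 0$. Comparing with the definition ${\bo M}_\omega(y) = \sum_i y_i M_i + I$ used in \eqref{eq:BQPSDPrel1} (with the appropriate sign convention for the relaxation), the dual feasibility constraint reproduces the semidefinite constraint of \eqref{eq:BQPSDPrel1}, and the dual objective $y^\top q = q^\top y$ reproduces its objective. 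Thus the dual of \eqref{eq:BQPSDP2}, formed with $y$ as the multiplier of $\BM z = q$, is exactly problem \eqref{eq:BQPSDPrel1}. Since \eqref{eq:BQPSDP2} was itself constructed as the dual of \eqref{eq:BQPSDPrel1}, and these SDPs are strictly feasible so that strong duality and biduality hold, the variable $y$ appearing in \eqref{eq:BQPSDPrel1} is identified with the multiplier of the equality constraint in \eqref{eq:BQPSDP2}, which is the assertion of the lemma.

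I expect the main obstacle to be bookkeeping rather than anything deep: one must keep the sign conventions and the $\svec$/$\smat$ correspondence consistent throughout, in particular verifying that $u^\top z = \smat(u)\bullet\smat(z)$ under the symmetrized vectorization so that the conic duality pairing is the Frobenius inner product, and checking that the constant $I$ enters \eqref{eq:BQPSDPrel1} with the sign that matches $\svec I$ in the objective of \eqref{eq:BQPSDP2}. A clean way to avoid re-deriving strong duality from scratch is simply to invoke that \eqref{eq:BQPSDP2} was \emph{defined} as the Lagrangian dual of \eqref{eq:BQPSDPrel1}: by the standard biduality of linear conic programs under Slater's condition (\Cref{assum:a2}), the dual of \eqref{eq:BQPSDP2} returns \eqref{eq:BQPSDPrel1}, and in that derivation the multiplier associated with $\BM z = q$ is precisely the original variable $y$. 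This makes the lemma almost immediate, with the only real content being the explicit confirmation that the roles of primal variable and dual multiplier swap correctly between the two formulations.
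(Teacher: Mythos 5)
Your approach is essentially the paper's own: the paper offers no separate proof of this lemma, simply observing that \eqref{eq:BQPSDP2} was constructed as the Lagrangian dual of \eqref{eq:BQPSDPrel1}, so that under strict feasibility biduality identifies the variable $y$ of \eqref{eq:BQPSDPrel1} with the multiplier of $\BM z = q$; your explicit Lagrangian computation just spells this out. One bookkeeping slip worth fixing: from $\smat(u)\succeq 0$ with $u=\svec I-\BM^{\top}y$ you obtain $I-\sum_{i} y_iM_i\succeq 0$ (not $\preceq 0$), and the two inequalities you call ``equivalent'' in fact differ by a sign; matching the constraint $\sum_{i} y_iM_i+I\succeq 0$ of \eqref{eq:BQPSDPrel1} then requires either the substitution $y\mapsto -y$ or attaching the equality multiplier with the opposite sign in the Lagrangian --- precisely the sign-convention check you flag, and it does not affect the conclusion.
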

Problem \cref{eq:BQPSDP2} is now almost in the right form for our solver: it is in the primal form and the dual solution is a low-rank matrix. Notice that, despite $Q$ being possibly a dense matrix, the data in \cref{eq:BQPSDP2} are always very sparse. One obstacle remains---the linear equality constraints in \cref{eq:BQPSDP2}. We are addressing it in the next section.


\subsection{Equality constraints by $\ell_1$-penalty}
Interior-point methods have been designed for optimization problems with inequality constraints and cannot directly handle equality constraints. There are various ways how to overcome this, starting with writing equality as two inequalities; see \cite{anjos} for an overview focused on SDP interior-point algorithms. In our algorithm, we treat the equality constraints by the following $\ell_1$-penalty approach.

Introducing $\ell_1$-penalty for the linear equality constraints in \cref{eq:BQPSDP2}, we obtain the following problem
\begin{align}
\label{eq:BQPSDP2pen}
&\min_{z\in \RR^{\tn}} (\svec I)^\top z + \mu \|\BM z - {q}\|_1\\
&\mbox{subject to}~
\begin{aligned}[t]&\smat (z) \succeq 0\,,
\end{aligned}\nonumber
\end{align}
with a penalty parameter $\mu>0$. Recall that this penalty is exact, in the sense that there exists $\mu_0>0$ such that the solution of \cref{eq:BQPSDP2pen} with $\mu\geq \mu_0$ is equivalent to the solution of \cref{eq:BQPSDP2} (see, e.g., \cite{di1989exact} or \cite[Chap.5.3]{geiger-kanzow}).

We now introduce two new variables, $r\in\RR^n$, $s\in\RR^n$, satisfying
$$
\BM z - {q} = r-s,\quad r\geq0,\ s\geq 0 \,.
$$
After substitution, \cref{eq:BQPSDP2pen} reads as 
\begin{align}
\label{eq:BQPSDP2pen1}
&\min_{z\in \RR^{\tn},\,r\in\RR^n,\,s\in\RR^n} (\svec I)^\top z + \mu \sum_{i=1}^n(r_i+s_i)\\
&\mbox{subject to}~
\begin{aligned}[t]&\smat(z) \succeq 0\\
&\BM z - {q} = r-s\\
&r\geq0,\ s\geq 0\,.
\end{aligned}\nonumber
\end{align}
Using the identity $r=\BM z - {q}+s$ to eliminate variable $r$, we arrive at our final problem
\begin{align}
\label{eq:BQPSDP2pen2}
&\min_{z\in \RR^{\tn},\,s\in\RR^n} (\svec I)^\top z + \mu \sum_{i=1}^n\left((\BM z - {q})_i +2s_i\right)\\
&\mbox{subject to}~
\begin{aligned}[t]& \smat(z) \succeq 0\\
&\BM z - {q}+s \geq 0\\
&s\geq 0\,.
\end{aligned}\nonumber
\end{align}
Problem \cref{eq:BQPSDP2pen2} is now in the primal form \cref{eq:SDP-P}.

By Lemma \ref{lem:y1}, we know that $y$ in \eqref{eq:BQPSDPrel1} is the Lagrangian multiplier to the equality constraint in \eqref{eq:BQPSDP2}. The question is how to obtain $y$ from \eqref{eq:BQPSDP2pen2}, the $\ell_1$-penalty reformulation of \eqref{eq:BQPSDP2}.
\begin{lemma}\label{th:lemma}
	A solution $y$ of \eqref{eq:BQPSDPrel1} can be obtained from the solution of \eqref{eq:BQPSDP2pen2} as $y_i=\mu+\lambda_i,\ i=1,\dots,n$, in which $\mu$ is the penalty parameter at the solution and $\lambda$ is the Lagrangian multiplier to the constraint $\BM z - {q}+s \geq 0$.
\end{lemma}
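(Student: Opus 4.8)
The plan is to match the Karush–Kuhn–Tucker (KKT) systems of the penalized problem \eqref{eq:BQPSDP2pen2} and of the convex pair \eqref{eq:BQPSDPrel1}/\eqref{eq:BQPSDP2}, and to read $y$ off the stationarity condition in $z$. The guiding observation is that in \emph{both} problems the positive semidefinite multiplier attached to $\smat(z)\succeq 0$ turns out to equal the moment matrix $M(v):=\sum_i v_iM_i+I$ evaluated at the vector one wants to recover; equating the two resulting expressions for that multiplier will yield $y_i=\mu+\lambda_i$.

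First I would use the exactness of the penalty (recalled after \eqref{eq:BQPSDP2pen}): for $\mu\geq\mu_0$ the $z$-part of a solution $(z^*,s^*)$ of \eqref{eq:BQPSDP2pen2} solves \eqref{eq:BQPSDP2}, so the penalty term vanishes, giving $\BM z^*=q$ (and $s^*=0$). Writing $z^*=\svec Z^*$, the matrix $Z^*\succeq 0$ with $M_i\bullet Z^*=q_i$ is exactly the dual-feasible certificate needed for \eqref{eq:BQPSDPrel1}.

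Next I would form the KKT conditions of \eqref{eq:BQPSDP2pen2}, using the same sign convention under which Lemma \ref{lem:y1} identifies the equality multiplier of \eqref{eq:BQPSDP2} with $y$ (so that the associated PSD multiplier equals $M(y)$). Let $W\succeq 0$, $\lambda$ and $\sigma$ be the multipliers of $\smat(z)\succeq 0$, $\BM z-q+s\geq 0$ and $s\geq 0$, respectively. Since the objective contributes $\svec I+\mu\BM^\top e$ to the $z$-gradient and the constraint $\BM z-q+s\geq 0$ contributes $\BM^\top\lambda$, stationarity in $z$ reads
\begin{equation*}
\svec W=\svec I+\BM^\top(\mu e+\lambda)=\svec\big(M(\mu e+\lambda)\big),
\end{equation*}
i.e. $W=M(\mu e+\lambda)$. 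Stationarity in $s$ expresses $\sigma$ through $\lambda$ and $2\mu e$ and only certifies the admissible range of $\lambda$ (equivalently $\lvert(\mu e+\lambda)_i\rvert\leq\mu$), in agreement with the exact-penalty threshold; it is not otherwise needed.

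Finally I would set $\hat y:=\mu e+\lambda$ and verify directly that it meets the optimality conditions of the convex pair \eqref{eq:BQPSDPrel1}/\eqref{eq:BQPSDP2}. Primal feasibility of the moment problem, $M(\hat y)=W\succeq 0$, holds because $W$ is a PSD multiplier; dual feasibility $M_i\bullet Z^*=q_i$ and complementary slackness $M(\hat y)\bullet Z^*=W\bullet\smat(z^*)=0$ are the corresponding KKT relations of \eqref{eq:BQPSDP2pen2} transported through $z^*=\svec Z^*$. Together with the strong duality guaranteed by Slater's condition (\Cref{assum:a2}), these primal/dual feasibility and complementarity relations certify $\hat y$ as a solution of \eqref{eq:BQPSDPrel1}, which is the claim. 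I expect the only delicate point to be pinning down the sign convention so that $\lambda$ enters the $z$-stationarity with the same sign as $y$ does in \eqref{eq:BQPSDP2}; once the PSD multiplier is correctly identified with $M(\cdot)$, the recovery of $y$ is immediate, and because it is phrased as a verification rather than an inversion it does not even require the $M_i$ (equivalently the rows of $\BM$) to be linearly independent.
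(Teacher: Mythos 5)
Your proof is correct and its engine is the same as the paper's: the $z$-stationarity of the KKT system for \eqref{eq:BQPSDP2pen2} produces the combination $\mu e+\lambda$ multiplying $\BM^\top$, and this is what gets identified with the $y$ of Lemma~\ref{lem:y1}. The difference lies in how the identification is closed. The paper writes the Lagrangians of \emph{both} \eqref{eq:BQPSDP2pen2} and \eqref{eq:BQPSDP2}, differentiates each in $z$ at the same $z^*$, and equates the two resulting expressions for $\nabla f(z^*)$ to read off $y_i=\mu+\lambda_i$; this tacitly assumes the two PSD multipliers coincide and that the coefficients of the $\BM_{ij}$ may be matched. You instead recognize the PSD multiplier of \eqref{eq:BQPSDP2pen2} as the moment matrix $\sum_i(\mu+\lambda_i)M_i+I$ and then \emph{verify} that $\hat y=\mu e+\lambda$ satisfies the optimality conditions of \eqref{eq:BQPSDPrel1}: feasibility $M(\hat y)=W\succeq0$, the dual certificate $\BM z^*=q$ supplied by exactness of the penalty, and complementarity $W\bullet\smat(z^*)=0$ inherited from the KKT conditions. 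This is a slightly stronger package: it needs no coefficient identification (hence, as you note, no independence assumption on the rows of $\BM$), it makes explicit the role of penalty exactness, which the paper uses only implicitly by evaluating both KKT systems at the same $z^*$, and weak duality alone then certifies optimality (Slater's condition is only needed to guarantee that the multipliers exist). The one point to watch --- which you flag yourself --- is the sign convention: with the standard convention $\mathcal{L}=f-\lambda^\top g$, $\lambda\geq0$ for a constraint $g\geq 0$ in a minimization, stationarity yields $W=M(\mu e-\lambda)$, so the stated formula requires $\lambda$ to enter the Lagrangian with the opposite sign (whence $\lambda\leq 0$ and $|\mu+\lambda_i|\leq\mu$, consistent with the exact-penalty threshold). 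The paper's own proof carries exactly the same ambiguity (it asserts $\lambda\geq 0$ while its Lagrangian convention forces the opposite), so your derivation is faithful to the intended statement and is not a gap.
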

\begin{proof} We start with the optimality conditions of the problem \eqref{eq:BQPSDP2pen2}. Using suitable matrices ${\cal E}_i\in\S^m$, we can write
	$\smat(z) = \sum_{i=1}^{\tn} {\cal E}_i z_i$.
	The Lagrangian of \eqref{eq:BQPSDP2pen2} is then
	\begin{align*}
	\mathcal{L}_1(z,s,\Gamma,\lambda,\eta)= &-\left(f(z)+\mu \sum_{i=1}^n\left((\BM z - {q})_i +2s_i\right)\right)- \Gamma \bullet \sum_{i=1}^{\tn} {\cal E}_i z_i\\
	& - \sum_{i=1}^n \lambda_i\left(\sum_{j=1}^{\tilde{n}}\BM_{ij}z_j-{q}_i+s_i\right)- \sum_{i=1}^n \eta_i s_i,
	\end{align*}
	where $\Gamma\succeq 0$ and $\lambda,\eta\geq 0$ are the corresponding Lagrangian multipliers and $f(z)=(\svec I)^\top z$. By using the KKT conditions at the solution we have
	\begin{align*}
	&\nabla_{\!z}\,\mathcal{L}_1(z^*,s^*,\Gamma^*,\lambda^*,\eta^*)=0:\\  
	&-(\nabla f(z^*))_j-\mu\sum_{i=1}^{n}\BM_{ij}
	-\sum_{i=1}^{n}\BM_{ij}\lambda^*_i
	-\Gamma^*\bullet {\cal E}_j = 0,\quad j=1,\ldots,\tn
	\end{align*}
	and so
	\begin{align}
	\label{eq:lagl1}
	(\nabla f(z^*))_j = 
	-\sum_{i=1}^{n}\BM_{ij}(\mu + \lambda^*_i)-\Gamma^*\bullet {\cal E}_j,\quad j=1,\ldots,\tn.
	\end{align}
	Now, the Lagrangian of the problem \eqref{eq:BQPSDP2} is
	\begin{align*}
	\mathcal{L}_2(z,\Gamma,y)= -f(z)-\Gamma \bullet \sum_{i=1}^{\tn} {\cal E}_i z_i - \sum_{i=1}^n y_i\left(\sum_{j=1}^{\tilde{n}}\BM_{ij}z_j-{q}_i\right),
	\end{align*}
	where, as above, $\Gamma\succeq 0$ and $y\geq 0$ are the corresponding Lagrangian multipliers. From the KKT conditions, we have
	\begin{align*}
	&\nabla_{\!z}\,\mathcal{L}_2(z^*,\gamma^*,y^*)=0:\\
	& -(\nabla f(z^*))_j-\Gamma^*\bullet {\cal E}_j-\sum_{i=1}^{n}\BM_{ij}y^*_i =0,\quad j=1,\ldots,\tn&
	\end{align*}
	so
	\begin{align}
	\label{eq:lag}
	(\nabla f(z^*))_j=-\sum_{i=1}^{n}\BM_{ij}y^*_i-\Gamma^*\bullet {\cal E}_j,\quad j=1,\ldots,\tn.
	\end{align}
	Thus, from \eqref{eq:lagl1} and \eqref{eq:lag}, $y$ can be written as
	\begin{align}
	\label{eq:y}
	y_i=\mu+\lambda_i,\quad i=1,\dots,n
	\end{align}
	and we are done.
\end{proof}

The question remains how to update the penalty parameter $\mu$ in order to get an exact solution of problem \cref{eq:BQPSDP2} using our interior-point solver. Here we will use that fact that the $\ell_1$ penalty is exact when $\mu$ is greater than the $\ell_\infty$-norm of the Lagrangian multiplier associated with the penalized constraint at a KKT point; see, e.g., \cite[Thm.5.11]{geiger-kanzow}. 

We have tested the following two options.
\begin{itemize}
	\item[(A)] In the interior-point method, we start with an estimate of the multiplier, and then, in every iteration, set $\mu_k = 10\|y^{(k)}\|_\infty$, possibly with a control of the size of the change; here the index $k$ refers to the interior-point iteration and $y$ is as in Lemma~\ref{th:lemma}.
	\item[(B)]
	Estimate the value of $\mu$, fully solve the penalized problem \cref{eq:BQPSDP2pen2} by Loraine, check the residuum of the equality constraint in \cref{eq:BQPSDP2} for the optimal solution of \cref{eq:BQPSDP2pen2} and, if bigger than a prescribed threshold, increase $\mu$ and repeat this process. 
\end{itemize}
Option (A) is  attractive as it is supported by the theory. However, by changing the penalty parameter in every iteration, we also change the optimization problem. This is, unfortunately, reflected in the efficiency of the IP method and of the preconditioner.
Option (B) is simple and robust but may also be expensive: we may need to solve several SDP problems to full optimality. The good news is that a class of BQP problems usually requires a single choice of $\mu$ in order to guarantee convergence to an optimal point with the norm of the residuum of the equality constraint smaller than a prescribed value; see the numerical experiments in the last section.
Needless to say that one cannot start with a too big value of $\mu$ due to the potential ill-conditioning of the resulting SDP problem and resulting difficulties of the IP algorithm to solve the problem at all.

\begin{remark}
	Instead of problem \cref{eq:BQPSDP2pen2}, we could use another reformulation of problem \cref{eq:BQPSDP2pen}, namely
	\begin{align*}
	&\min_{z\in \RR^{\tn}} (\svec I)^\top z + \mu \sum_{i=1}^n (\BM z - {q})_i\\
	&\mbox{subject to}~
	\begin{aligned}[t]& \smat(z) \succeq 0\\
	&\BM z - {q}\geq 0\,,
	\end{aligned}\nonumber
	\end{align*}
	where we use the simple fact that $\|x\|_1 = \sum x_i$ for $x\geq 0$.
	This problem is also in the primal form \cref{eq:SDP-P} and may seem more attractive than \cref{eq:BQPSDP2pen2}, as it does not need the additional variables~$s$. We have implemented and tested this formulation, too. It turned out that our solvers were more efficient when solving problem \cref{eq:BQPSDP2pen2}, both in terms of number of iterations (in particular in the ADMM method) and the efficiency of the preconditioner in Loraine. Hence all numerical experiments reported here use problem formulation~\cref{eq:BQPSDP2pen2}.
\end{remark}

\subsection{Loraine and problem \cref{eq:BQPSDP2pen2}}
Recall the definition of the matrix $\BM= (\svec M_1, \ldots,\svec M_n)^T$ with matrices $M_i$ introduced at the beginning of this section. We have the following result.
\begin{lemma}\label{th:chordal}
	There exists a permutation matrix $P\in\RR^{\tn\times\tn}$ such that $P\BM^\top\BM P^\top$ is a block diagonal matrix with small full blocks. In particular, $\BM^\top\BM$ is a  sparse chordal matrix.
\end{lemma}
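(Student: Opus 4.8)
The plan is to exploit the combinatorial form of the moment matrix rather than to manipulate $\BM^\top\BM$ directly. Recall from the construction in \Cref{sec:relax} that, after the reductions induced by $x_i^2=1$, every entry of $\bo{M}_\omega(y)$ equals either the constant $1$ or a single variable $y_k$; no entry is a genuine combination of two or more distinct variables. Consequently the coefficient matrices $M_1,\dots,M_n$ have \emph{pairwise disjoint supports}: the $\{0,1\}$ matrix $M_k$ carries a $1$ exactly in those positions of the moment matrix occupied by $y_k$, and these position sets are disjoint over $k$. This single observation is the engine of the whole proof.

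First I would translate disjoint supports into a statement about the columns of $\BM$. Reading $\BM=(\svec M_1,\dots,\svec M_n)^\top$ by columns, its $j$-th column is indexed by the $j$-th $\svec$-position $(p,q)$ of the moment matrix, with entries $(\svec M_i)_j$ for $i=1,\dots,n$. By disjointness of the supports, position $(p,q)$ is occupied by at most one variable, so \emph{each column of $\BM$ has at most one nonzero entry} (columns corresponding to constant positions are entirely zero). I would then compute, for $\svec$-positions $j$ and $k$,
\begin{equation*}
(\BM^\top\BM)_{jk}=\sum_{i=1}^{n}(\svec M_i)_j\,(\svec M_i)_k,
\end{equation*}
and observe that this sum is nonzero precisely when some single variable $y_i$ occupies \emph{both} positions $j$ and $k$; otherwise the two one-sparse columns have disjoint supports and the product vanishes.

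This shows that the relation ``positions $j$ and $k$ are occupied by the same variable'' governs the entire sparsity pattern, and it is clearly an equivalence relation on the non-constant $\svec$-positions. I would take $P$ to be any permutation that lists the positions grouped by the variable occupying them (with the constant positions listed last). Then $P\BM^\top\BM P^\top$ is block diagonal: one block for each variable $y_i$, of size equal to the number $d_i$ of $\svec$-positions occupied by $y_i$, together with $1\times 1$ zero blocks for the constant positions. Each variable block is \emph{full}, since within a group every pair of positions shares the same variable and the corresponding product of (positively scaled) $\{0,1\}$ entries is nonzero. The blocks are small because $d_i$ is merely the number of factorizations of the monomial attached to $y_i$ into two basis monomials after the $x_i^2=1$ reduction; this count is at most $m$ and in practice much smaller than $\tn=m(m+1)/2$.

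Finally, the chordality is immediate from the block structure: the sparsity graph of $\BM^\top\BM$ is, up to the relabeling given by $P$, a disjoint union of cliques (one clique on each variable group, together with isolated vertices for the constant positions), and a disjoint union of complete graphs is chordal; since $P$ is merely a relabeling, it preserves chordality, and hence $\BM^\top\BM$ itself is chordal. The main obstacle I anticipate is not any of these algebraic steps but the careful bookkeeping behind the opening claim: one must verify rigorously that, across all orders $\omega$ and after all $x_i^2=1$ reductions, every position of the moment matrix reduces to a single variable (or a constant) and never to a sum of distinct variables, and one must make the bound on the block sizes $d_i$ precise enough to justify calling the blocks ``small''.
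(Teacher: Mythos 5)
Your proof is correct and follows essentially the same route as the paper: both arguments rest on the observation that the $M_i$ have pairwise disjoint supports, so each column of $\BM$ has at most one nonzero, and grouping the $\svec$-positions by the variable occupying them yields the permutation $P$ and the block-diagonal (union-of-cliques, hence chordal) structure of $\BM^\top\BM$. Your write-up is merely more explicit than the paper's about the entrywise computation of $\BM^\top\BM$ and the chordality of a disjoint union of cliques.
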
	
\begin{proof}
	Every matrix $M_i$ localizes the corresponding variables in the moment matrix $\sum\limits_{i=1}^n M_iy_i+I$ in \cref{eq:BQPSDPrel1}, whereas the elements of the moment matrix are either ones or the variables $y_i$; see also \Cref{ex:21}.
	As every $M_i$ is associated with exactly one variable $y_i$, the nonzeros in this matrix are unique to the matrix.
	Therefore every column of matrix $\BM$ will either contain all zeros or a single nonzero number one at the $i$-th position, $i=1,\ldots,n$. We can now re-order the columns of $\BM$ such that the first columns will only contain zeros, the next columns element 1 at the first position, the next ones element 1 at the second position, etc. This re-ordering will define the permutation matrix $P$. The claims follow.
\end{proof}
\begin{Example} \label{ex:22}
	Consider a problem with $s=3$ and relaxation order $\omega=2$. Utilizing the constraints $x_i^2=1$,  
	we only consider variables associated with the monomial basis $b(x)=(x_1,\ x_2,\ x_3,\ x_1x_2,\ x_2x_3,\ x_1x_3)$:
	{\small
		\begin{align*}
		X = b(x)b^\top(x)&=\begin{pmatrix}
		x_1^2&x_1x_2&x_1x_3&x_1^2x_2&x_1x_2x_3&x_1^2x_3\\
		x_1x_2&x_2^2&x_2x_3&x_1x_2^2&x_2^2x_3&x_1x_2x_3\\
		x_1x_3&x_2x_3&x_3^2&x_1x_2x_3&x_2x_3^2&x_1x_3^2\\
		x_1^2x_2&x_1x_2^2&x_1x_2x_3&x_1^2x_2^2&x_1x_2^2x_3&x_1^2x_2x_3\\
		x_1x_2x_3&x_2^2x_3&x_2x_3^2&x_1x_2^2x_3&x_2^2x_3^2&x_1x_2x_3^2\\
		x_1^2x_3&x_1x_2x_3&x_1x_3^2&x_1^2x_2x_3&x_1x_2x_3^2&x_1^2x_3^2
		\end{pmatrix}\\
		&=\begin{pmatrix}
		1&x_1x_2&x_1x_3&x_2&x_1x_2x_3&x_3\\
		x_1x_2&1&x_2x_3&x_1&x_3&x_1x_2x_3\\
		x_1x_3&x_2x_3&1&x_1x_2x_3&x_2&x_1\\
		x_2&x_1&x_1x_2x_3&1&x_1x_3&x_2x_3\\
		x_1x_2x_3&x_3&x_2&x_1x_3&1&x_1x_2\\
		x_3&x_1x_2x_3&x_1&x_2x_3&x_1x_2&1
		\end{pmatrix}
		\end{align*}
	}%
	i.e., variables $y\in\RR^7$ corresponding to $(x_1, x_2, x_3, x_1x_2,\ x_2x_3, x_1x_3,x_1x_2x_3)$, the unique elements of~$X$.
	The matrices $M_i$ simplify to
	{\small
		\begin{align*}
		M_1 &= \begin{pmatrix}
		0&0&0&0&0&0\\0&0&0&1&0&0\\0&0&0&0&0&1\\0&1&0&0&0&0\\0&0&0&0&0&0\\0&0&1&0&0&0
		\end{pmatrix},\ 
		M_2 = \begin{pmatrix}
		0&0&0&1&0&0\\0&0&0&0&0&0\\0&0&0&0&1&0\\1&0&0&0&0&0\\0&0&1&0&0&0\\0&0&0&0&0&0
		\end{pmatrix},\ldots,
		M_7 = \begin{pmatrix}
		0&0&0&0&1&0\\0&0&0&0&0&1\\0&0&0&1&0&0\\0&0&1&0&0&0\\1&0&0&0&0&0\\0&1&0&0&0&0
		\end{pmatrix}
		\end{align*}
	}%
	and thus
	{\small
		$$
		\BM = 
		\left(\begin{array}{ccccccccccccccccccccc}
		0&0&0&0&0&0&0&1&0&0&0&0&0&0&0&0&0&1&0&0&0\\
		0&0&0&0&0&0&1&0&0&0&0&0&1&0&0&0&0&0&0&0&0\\
		0&0&0&0&0&0&0&0&0&0&0&1&0&0&0&1&0&0&0&0&0\\
		0&1&0&0&0&0&0&0&0&0&0&0&0&0&0&0&0&0&0&1&0\\
		0&0&0&0&1&0&0&0&0&0&0&0&0&0&0&0&0&0&1&0&0\\
		0&0&0&1&0&0&0&0&0&0&0&0&0&1&0&0&0&0&0&0&0\\
		0&0&0&0&0&0&0&0&1&0&1&0&0&0&0&0&1&0&0&0&0
		\end{array}\right)
		$$
	}%
	and
	{\small
		$$
		\BM P^\top = 
		\left(\begin{array}{ccccccccccccccccccccc}
		0&0&0&0&0&0&1&1&0&0&0&0&0&0&0&0&0&0&0&0&0\\
		0&0&0&0&0&0&0&0&1&1&0&0&0&0&0&0&0&0&0&0&0\\
		0&0&0&0&0&0&0&0&0&0&1&1&0&0&0&0&0&0&0&0&0\\
		0&0&0&0&0&0&0&0&0&0&0&0&1&1&0&0&0&0&0&0&0\\
		0&0&0&0&0&0&0&0&0&0&0&0&0&0&1&1&0&0&0&0&0\\
		0&0&0&0&0&0&0&0&0&0&0&0&0&0&0&0&1&1&0&0&0\\
		0&0&0&0&0&0&0&0&0&0&0&0&0&0&0&0&0&0&1&1&1
		\end{array}\right)
		$$
	}%
	with a suitable permutation matrix $P$. Then $P\BM^\top\BM P^\top$ is block diagonal with block sizes 2 and~3.
\end{Example}
\begin{remark}
	The size of the blocks in the matrix $P\BM^\top\BM P^\top$ is given by the count of the corresponding variables in the upper triangle of the moment matrix. This number grows with the relaxation order.
\end{remark}
\begin{theorem}
	Assume that \cref{eq:BQP} has a unique solution and that Lasserre hierarchy is exact for order $\omega \geq 1$. Then problem \cref{eq:BQPSDP2pen2} corresponding to $\omega$ satisfies Assumptions \ref{assum:a2}--\ref{assum:alr}.
\end{theorem}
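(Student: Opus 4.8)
The plan is to exhibit the explicit data that casts \cref{eq:BQPSDP2pen2} as an instance of the primal form \cref{eq:SDP-P} and then to check the five assumptions one at a time against that data. Taking the free variable to be $\hat y=(z,s)$, the matrix slack to be $S=\smat(z)=\sum_{i=1}^{\tn}{\cal E}_iz_i$ (with the ${\cal E}_i$ of \Cref{th:lemma}), and the two blocks $\BM z-{q}+s\ge 0$ and $s\ge 0$ as the linear part, the data read $C=0$, $A_i=-{\cal E}_i$ for the $z$-components and $A_i=0$ for the $s$-components, while $D$ is the sparse operator built from $\BM$ and two identity blocks, $\nu=2n$, and the objective vector $b$ comes from $(\svec I)^\top z+\mu\sum_i((\BM z-{q})_i+2s_i)$; the semidefinite block has size $m$. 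The preliminary observation I would record first is that the corresponding dual \cref{eq:SDP-D} is precisely the ($\ell_1$-penalised) problem \cref{eq:BQPSDP2}, which is itself the dual of the moment relaxation \cref{eq:BQPSDPrel1}, so that its positive semidefinite variable coincides with the moment matrix ${\bo M}_\omega(y^*)$; this is exactly the content of the reformulation and of \Cref{lem:y1,th:lemma}.

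With this dictionary fixed, Assumption~\ref{assum:a5} is immediate: by \cref{eq:dim} we have $m=\sum_{i=1}^\omega\binom{s}{i}$, while the number of constraints of \cref{eq:SDP-D} equals $\tn+n$ with $\tn=m(m+1)/2$, so $m\ll\tn\le\tn+n$. Assumptions~\ref{assum:a3} and~\ref{assum:a4} are the computational heart and both rest on sparsity. For \ref{assum:a3}, since the ${\cal E}_i$ form the $\svec$-basis of $\S^m$, the matrix $\BA=[\svec A_1,\dots]$ is a signed copy of the identity padded with zero columns, so products with $\BA$ and $\BAT$ cost $\mathcal O(n)$. For \ref{assum:a4} I would invoke \Cref{th:chordal}: after the permutation $P$ of that lemma, $\BM^\top\BM$ is block diagonal with small full blocks, and the normal matrix built from the linear-constraint operator $D$ inherits this block-diagonal, chordal structure (its diagonal part is $\BM\BM^\top$, which is in fact a diagonal matrix because distinct moment variables localise disjoint entries of the moment matrix, cf.\ \Cref{ex:22}), so its factorisation and the action of its inverse cost $\mathcal O(n)$.

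For Slater's condition (Assumption~\ref{assum:a2}) I would produce interior points on both sides. On the primal side, $z=\svec I$ gives $\smat(z)=I\succ0$, and choosing $s_i$ strictly larger than $\max\{0,({q}-\BM z)_i\}$ makes both $\BM z-{q}+s>0$ and $s>0$, so the primal is strictly feasible. On the dual side I would use that the moment relaxation itself is strictly feasible, since ${\bo M}_\omega(0)=I\succ0$: every off-diagonal entry of the moment matrix is one of the variables $y_i$, while the diagonal is forced to one by $x_i^2=1$ (cf.\ \Cref{ex:21,ex:22}). This strictly positive definite moment matrix, together with strictly positive multipliers for the two penalised inequality blocks (for instance $\lambda=\eta=\mu e>0$, which satisfy the dual relation $\lambda+\eta=2\mu e$ induced by the $\ell_1$-penalty), furnishes a strictly feasible point of \cref{eq:SDP-D}.

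The conceptual crux, and the only place where both hypotheses of the theorem are used, is Assumption~\ref{assum:alr}. Here I would argue that the solution $X^*$ of \cref{eq:SDP-D} is the moment matrix ${\bo M}_\omega(y^*)$; since \cref{eq:BQP} has a unique solution and the hierarchy is exact at order $\omega$, \Cref{def:ex} together with the rank characterisation of \cite[Thm.\;6.19]{lasserre} gives that the numerical rank of ${\bo M}_\omega(y^*)$ is at most two, so $X^*$ has at most $k\le2$ outlying eigenvalues, which is precisely \ref{assum:alr}. The main obstacle I anticipate is not any single long computation but the bookkeeping around the penalised dual: one must be sure that introducing the $\ell_1$-penalty and the slacks $s$ does not destroy the identification of the dual semidefinite variable with the moment matrix, and that the penalty relations of \Cref{th:lemma} genuinely admit strictly positive multipliers, so that the dual half of \ref{assum:a2} delivers \emph{strict} feasibility rather than mere feasibility.
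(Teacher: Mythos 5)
Your proposal is correct and follows essentially the same route as the paper's proof: a check of Assumptions \ref{assum:a2}--\ref{assum:alr} one by one, using the sparsity of the ${\cal E}_i$ for \Cref{assum:a3}, Lemma~\ref{th:chordal} for \Cref{assum:a4}, explicit Slater points for \Cref{assum:a2}, the bound $\tn=m(m+1)/2$ for \Cref{assum:a5}, and the identification of the dual matrix variable with the moment matrix (rank at most two under uniqueness and exactness) for \Cref{assum:alr}. You are in fact somewhat more careful than the paper on the dual half of Slater's condition, where you exhibit compatible strictly positive multipliers $\lambda=\eta=\mu e$ alongside ${\bo M}_\omega(0)=I$ rather than merely citing strict feasibility of \cref{eq:BQPSDPrel1}.
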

\begin{proof}
	A strictly feasible point for \cref{eq:BQPSDP2pen2}  can be found, e.g., by choosing $z$ as a unit vector and $s$ positive, arbitrarily large. Similarly, the moment problem \cref{eq:BQPSDPrel1} is trivially strictly feasible by choosing $y=0$. Hence \Cref{assum:a2} holds.
	Every matrix ${\cal E}_i$ contains at most two nonzero elements, hence \Cref{assum:a3} is trivially satisfied.
	\Cref{assum:a4} holds by Lemma~\ref{th:chordal}: because the matrix $\BM^\top\BM$ is chordal and sparse, sparse Cholesky factorization leads to zero fill-in and is thus very efficient.
	\Cref{assum:a5} is satisfied by construction of the problem, as  the number of variables $\tn$ is proportional to the square of the size of the matrix inequality $m$.
	Finally, under the assumption of unique solution to \cref{eq:BQP} and exact relaxation, the dual variable associated with the matrix inequality in \cref{eq:BQPSDP2pen2} has rank at most two. Hence \Cref{assum:alr} is satisfied.
\end{proof}

\section{ADMM for SDP relaxations}
The Alternating Direction Method of Multipliers (ADMM) is a popular alternative to the interior-point methods for convex optimization problems. Its extension to semidefinite optimization was introduced in \cite{wen2010alternating}. It proved to be efficient for certain SDP problems, among others many UBQPs (\cite[Sec 4.3]{wen2010alternating}). We thus briefly describe this algorithm and offer a numerical comparison with Loraine. Furthermore, we will introduce a new ``hybrid" approach combining ADMM with Loraine.

\subsection{The ADMM-SDP algorithm}
The method consists of minimizing the augmented Lagrangian of the problem
$$
{\cal L}_\rho(y,S,X) = -b^\top y +  X \bullet \left(\sum_{i=1}^n y_i A_i + S - C\right) 
+ \frac{1}{2\rho} \left\|\sum_{i=1}^n y_i A_i + S - C\right\|_F^2
$$
alternately, first with respect to $y$, then with respect to $S$, and then updating $X$ by
$$
X^{(k+1)} = X^{(k)} + \frac{\sum_{i=1}^n y^{(k+1)}_i A_i + S^{(k+1)} - C}{\rho}\,.
$$
While the minimization with respect to $y$ leads to an unconstrained problem and the minimum is obtained by solving the first-order optimality system of (linear) equations, the
minimization of ${\cal L}_\rho$ with respect to $S$ can be formulated as a projection of the matrix $C-\sum_{i=1}^n y_i A_i - \rho X$ onto the semidefinite cone $\S^m_+$.

For $Z\in\S^m$, let ${\cal A}(Z)$ be a vector with elements $A_i\bullet Z$, $i=1,\ldots,n$ and $\left[Z\right]_+$ the projection of $Z$ on the semidefinite cone.
With a penalty parameter $\rho>0$ and a relaxation parameter $\sigma\in(0,\frac{1+\sqrt{5}}{2})$, the update of the primal-dual point $(y,S,X)$ in the $k$-th iteration of ADMM is defined in \Cref{algo:admm}.
\begin{algorithm}
	\caption{ADMM for SDP}
	\label{algo:admm}
	Given an initial point $(y^{(0)},S^{(0)},X^{(0)})$, parameters $\rho$ and $\sigma$, and a stopping parameter $\varepsilon_{\scriptscriptstyle\textrm{ADMM}}$.
	\begin{algorithmic}[1]
		\For{$k=0,1,2,\ldots$}
		\State Update $y$ by
		$y^{(k+1)} = -(\BA\BAT)^{-1}\left(\rho({\cal A}(X^{(k)})-b) + {\cal A}(S^{(k)}-C)\right)$
		\State Update $S$ by
		$\widehat{S} = C - \sum_{i=1}^n y_i^{(k+1)} A_i - \rho X^{(k)}, \quad S^{(k+1)} = \left[\widehat{S}\right]_+$
		\State Update $X$ by
		$\widehat{X} = \frac{1}{\rho}\left(S^{(k+1)} - \widehat{S}\right),\quad
		X^{(k+1)} = (1-\sigma)X^{(k)} + \sigma \widehat{X}$
		\State Update $\rho$
		\State Check convergence
		\EndFor
	\end{algorithmic}
\end{algorithm}

For details, in particular the choice and update of $\rho$, see \cite{wen2010alternating}.
In the numerical experiments below, we use our MATLAB implementation\footnote{https://github.com/kocvara/ADMM\_for\_SDP} of the algorithm.

\subsection{ADMM as a warm starter for Loraine}
When testing the ADMM algorithm with the MAXCUT problems considered in this paper, we have made the following two observations:
\begin{itemize}
	\item[(i)] The method is more efficient when solving the reformulated (bigger) problem \cref{eq:BQPSDP2pen2} than the smaller one \cref{eq:BQPSDPrel1}. Moreover, for problem \cref{eq:BQPSDP2pen2}, the convergence is very fast in the first iterations, before it slows down: the algorithm gets quickly a ``good" approximation of the solution, both primal and dual.
	\item[(ii)] Due to the nature of the ADMM method (projections on the semidefinite cone), the rank of the approximate solution is always ``numerically exact''; in particular, the rank of $X^{(k)}$ is low when we are close enough to the solution.
\end{itemize}
The fact that a low-rank approximation of the solution can be obtained relatively quickly, together with the fact that ADMM is a primal-dual algorithm leads to the following idea: use a low-precision ADMM solution as a warm start for Loraine. Presuming that the ADMM approximation of $X$ is already of the expected low rank (or very close to it), the preconditioner $H_\alpha$ will be extremely efficient during the remaining iterations of Loraine. We therefore propose the following \emph{hybrid ADMM-Loraine} \Cref{algo:admm-loraine}.
\begin{algorithm}
	\caption{ADMM-Loraine}
	\label{algo:admm-loraine}
	\begin{algorithmic}[1]
		\State
		Solve \cref{eq:BQPSDP2pen2} by ADMM with stopping tolerance $\varepsilon_{\scriptscriptstyle\textrm{ADMM}}$. Save the primal-dual approximate solution $(y,S,X)_{\scriptscriptstyle\textrm{ADMM}}$.
		\State
		Solve \cref{eq:BQPSDP2pen2} by Loraine with
		\begin{itemize}
			\item[--] initial point $(y,S,X)_{\scriptscriptstyle\textrm{ADMM}}$
			\item[--] initial value of the stopping criterion for the CG method reduced to $10^{-6}$.
		\end{itemize}
	\end{algorithmic}
\end{algorithm}

Notice that, in order to get a primal-dual approximation of the solution, we have to solve the same problem formulation by ADMM and Loraine, i.e., formulation \cref{eq:BQPSDP2pen2} with the $\ell_1$-penalty and the same value of the penalty parameter $\mu$, despite the fact that ADMM could directly handle the linear equality constraints.

\section{Numerical experiments}\label{sec:examples}

In this section we report the results of our numerical experiments with relaxations of MAXCUT problems and randomly generated UBQPs. The presented algorithms will be compared with the state-of-the-art SDP solver MOSEK \cite{mosek} that is relying on the interior-point method and uses a direct linear solver for the Schur complement equation.

Recall from the introduction that \emph{it is not our goal to compete with current most efficient UBQP solvers}, rather to offer, hopefully, ways to increase their efficiency by using the higher order relaxations. Therefore we do not include results for any branch-and-bound-type solver, as our goal is merely to solve the relaxation \mbox{(sub-)problems}.

The stopping criterion for our implementations of Loraine and ADMM was based on the DIMACS errors \cite{dimacs} that measure the (normalized) primal and dual feasibility, duality gap and complementary slackness. The algorithms were stopped when all these errors were below $10^{-6}$. MOSEK was used with default settings.

All problems were solved on an iMac desktop computer with 3.6 GHz 8-Core Intel Core~i9 and 64 GB 2667 MHz DDR4 using MATLAB R2022b.
\subsection{Problem dimensions}
In the following experiments, we will compare algorithms solving two different formulation of the problem,  the original one~\cref{eq:BQPSDPrel1} and the one  with $\ell_1$-penalty \cref{eq:BQPSDP2pen2}. It is therefore worth to first compare the dimensions of these formulations.

First recall from \cref{eq:dim} the relation of the size $s$ of the original UBQP problem with the dimensions $n,m$ of the SDP relaxation \cref{eq:BQPSDPrel1}:
$
\frac{s^4}{24} \lessapprox n \leq 2^s - 1 ,\ m = \sum_{i=1}^\omega {s\choose i}\,.
$
\Cref{tab:problem_sizes} gives a comparison of problem dimensions  in \cref{eq:BQPSDPrel1} and \cref{eq:BQPSDP2pen2} in terms of $s$ and~$n$.
\begin{table}[htbp]
	\centering
	\caption{Problem dimensions for formulations \cref{eq:BQPSDPrel1} and \cref{eq:BQPSDP2pen2} as functions of the relaxation order; here $s$ is the number of variables in BQP \cref{eq:BQP} and $\omega$ the relaxation order and $\displaystyle\frac{s^4}{24} \lessapprox n \leq 2^s - 1$}
	\label{tab:sisi}\label{tab:problem_sizes}	
	{\footnotesize
		\begin{tabular}{cc|ccc}\toprule
			\multicolumn{2}{c|}{problem \cref{eq:BQPSDPrel1}}&		\multicolumn{3}{c}{problem \cref{eq:BQPSDP2pen2}}		\\
			variables &	matrix size &	variables &	matrix size &	lin. constraints\\\midrule
			$n$& $\displaystyle\sum_{i=1}^\omega {s\choose i}$ &	$\displaystyle\left(\sum_{i=1}^\omega {s\choose i}\right)^{\!\!2}+n$ &	$\displaystyle\sum_{i=1}^\omega {s\choose i}$ & $2n$\\
			\bottomrule
	\end{tabular}}
\end{table}
\subsection{MAXCUT problems}
For the first set of numerical experiments, we use MAXCUT problems, the standard test problems for UBQP algorithms.
Let $\Gamma$ be an undirected $n$-node graph and
let the arcs $(i,j)$ be associated with nonnegative weights $a_{ij}$.
The MAXCUT problem is formulated as the following UBQP:
\begin{equation}\label{eq:maxcut1}
\max_x\left\{\frac{1}{4}\sum_{i,j=1}^n a_{ij}(1-x_ix_j)
\mid x_i^2=1,\ i=1,\ldots,n\right\}
\end{equation}
and can be thus solved by the techniques presented above. The solution of the SDP relaxation \cref{eq:SDP} is of rank two whenever the relaxation is exact and the solution to \cref{eq:maxcut1} is unique. (Recall that uniqueness of $x$ means uniqueness up to the multiple by $-1$.)

Obviously, a complete unweighted graph (with $a_{ij}\in\{0,1\}$) may have many ``symmetric" solutions. Therefore we generated two sets of problems of increasing size. Firstly, to avoid the non-uniqueness, we generated undirected, weighted, generally complete graphs with weights randomly distributed between 0 and 12 with 20--50 nodes, using the MATLAB command {\tt graph}. It turned out that for all these problems the relaxation order two in the Lasserre hierarchy is already high enough to deliver the optimal solution of the MAXCUT problem \cref{eq:maxcut1}. Secondly, we generated a set of problems of the same sizes but for unweighted graphs.
The dimensions of the corresponding SDP problems for relaxation order two are shown in Table~\ref{tab:maxcut_problems}.
\begin{table}[tbhp]
	\centering
	\caption{Problems \texttt{MAXCUT-<n>}, relaxation order $\omega=2$: dimensions for formulations \cref{eq:BQPSDPrel1} and \cref{eq:BQPSDP2pen2}}
	\label{tab:maxcut_problems}	
	{\footnotesize
		\begin{tabular}{c|rr|rrr}\toprule
			&\multicolumn{2}{c|}{problem \cref{eq:BQPSDPrel1}}&		\multicolumn{3}{c}{problem \cref{eq:BQPSDP2pen2}}		\\
			problem & variables & matrix size & variables & matrix size & lin.\ constr.\\\midrule
			MAXCUT-20	&   6\,196&    211&     22\,366 &    211 &   6\,196\\
			MAXCUT-25	&  15\,275&    326&     53\,301 &    326 &  15\,275\\
			MAXCUT-30	&  31\,930&    466&    108\,811 &    466 &  31\,930\\
			MAXCUT-35	&  59\,535&    631&    199\,396 &    631 &  59\,535\\
			MAXCUT-40	& 102\,090&    821&    337\,431 &    821 & 102\,090\\
			MAXCUT-45	& 164\,220& 1\,036&    537\,166 & 1\,036 & 164\,220\\
			MAXCUT-50	& 251\,175& 1\,276& 814\,726 & 1\,276 & 251\,175\\
			\bottomrule
	\end{tabular}}
\end{table}

We first present the results for the weighted problems, see \Cref{tab:MAXCUT}. These problems are relatively ``simple": the ADMM method is rather efficient, and MOSEK converges in a very small number of iterations, six to eight. However, the sheer size prevents MOSEK with a direct solver to solve larger problems. (Notice that to solve formulation \cref{eq:BQPSDPrel1} of problem MAXCUT-40 by an interior-point method, one has to build and solve a sparse linear system with a 102\,090$\times $102\,090 matrix. This is rather challenging for a direct solver.) We present results for ADMM applied to both problem formulations, the original one~\cref{eq:BQPSDPrel1} and the re-written one of larger dimension and with $\ell_1$-penalty \cref{eq:BQPSDP2pen2}. Perhaps surprisingly, ADMM is more efficient for the latter problem, despite the bigger dimension. Moreover, unlike for ADMM applied to \cref{eq:BQPSDPrel1}, the number of iterations of ADMM applied to \cref{eq:BQPSDP2pen2} is almost independent of the problem size.
The last four columns of \Cref{tab:MAXCUT} show the results for the hybrid ADMM-Loraine algorithm: we present the number of iterations of ADMM plus Loraine (`iter'), the time of ADMM (`timeA'), time of Loraine (`timeL') and the total time of the hybrid method. For these experiments, we have used the stopping tolerance of ADMM
$\varepsilon_{\scriptscriptstyle\textrm{ADMM}}=5\cdot 10^{-3}$ (problems 20--25) and $\varepsilon_{\scriptscriptstyle\textrm{ADMM}}=5\cdot 10^{-4}$ (problems 30--50).
\begin{table}[htbp]
	\centering
	\caption{Loraine, MOSEK, ADMM and ADMM-Loraine in weighted \texttt{MAXCUT-<n>} problems, relaxation order $\omega=2$}
	\label{tab:MAXCUT}%
	{\footnotesize
		\begin{tabular}{c|rrr|r|rr|rr|rrrr}
			\toprule
			\multicolumn{1}{c|}{{\sc maxcut}} & \multicolumn{3}{c|}{Loraine for \cref{eq:BQPSDP2pen2}} & \multicolumn{1}{c|}{{\sc mosek} \cref{eq:BQPSDPrel1}} & \multicolumn{2}{c|}{{\sc admm} for \cref{eq:BQPSDPrel1}} & \multicolumn{2}{c|}{{\sc admm} for \cref{eq:BQPSDP2pen2}}&\multicolumn{4}{c}{{\sc admm}-Loraine for \cref{eq:BQPSDP2pen2}}\\
			\multicolumn{1}{c|}{problem} & \multicolumn{1}{|r}{iter} & \multicolumn{1}{r}{CG it} & \multicolumn{1}{r|}{time} & \multicolumn{1}{r|}{time}& \multicolumn{1}{r}   {iter} & \multicolumn{1}{r|}{time} & \multicolumn{1}{r}   {iter} & \multicolumn{1}{r|}{time} 
			& \multicolumn{1}{r}{iter} & \multicolumn{1}{r}{timeA} & \multicolumn{1}{r}{timeL} & \multicolumn{1}{r}{time}
			\\\midrule
			20 & 18 &  559 & 3.8 &    9 &  3615 &   10 & 3042 &  13 & 628+4 & 3.9& 0.8& 4.7\\
			25 & 20 &  728 &  11 &   78 &  4732 &   33 & 2735 &  28 & 181+5 & 2.5& 2.9& 5.4\\
			30 & 21 & 1032 &  28 &  607 &  6770 &   99 & 3537 &  80 & 795+5 & 20 & 5.8&  26\\
			35 & 23 & 2183 &  96 & 2911 &  5255 &  164 & 3030 & 126 & 863+4 & 42 &  11&  53\\
			40 & 27 & 2275 & 186 &  mem &  9611 &  500 & 1280 &  92 & 914+7 & 73 & 132& 205\\
			45 & 25 & 2521 & 335 &      & 16901 & 1400 & 2639 & 358 & 755+4 & 104&  33& 137\\
			50 & 24 & 2540 & 528 &      & 19521 & 2951 & 3296 & 745 & 727+5 & 162&  58& 220\\
			\bottomrule
	\end{tabular}}%
\end{table}%

To better demonstrate the behaviour of the hybrid method, in \Cref{fig:MAXCUT_hyb} we show the output of both codes. We can see that the ADMM method, indeed, gets very quickly a good approximation of objective value, while Loraine uses the warm start very efficiently. 
\begin{figure}
	{\footnotesize
		\begin{verbatim}
		================================= A D M M  for  S D P ==================
		Number of LMI Constraints:   1
		Number of Variables: 1065901
		Maximal Constraint Size: 1276
		------------------------------------------------------------------------
		iter    p-infeas     d-infeas       d-gap        error      objective
		------------------------------------------------------------------------
		100   0.00058292   0.08603023   0.00093126    0.086030   1981.22444814
		200   0.00018776   0.02554937   0.00034476    0.025549   2028.80448106
		300   0.00018959   0.01054250   0.00063882    0.010542   2037.35317708
		400   0.00010212   0.00991976   0.00025128    0.009920   2025.69453823
		500   0.00005383   0.00972192   0.00013810    0.009722   2038.21968191
		600   0.00002508   0.00891292   0.00001945    0.008913   2026.90960413
		700   0.00002609   0.00281354   0.00010411    0.002814   2037.33886004
		727   0.00002899   0.00047182   0.00012837    0.000472   2039.90970508
		------------------------------------------------------------------------
		Total ADMM iterations: 727; Final precision: 4.72e-04; CPU Time 162.59s
		========================================================================
		
		*** Loraine v0.1 ***
		Number of variables: 1065901
		Matrix size(s)     :    1276
		Linear constraints :  502350
		*** IP STARTS
		it      objective      error   cg_iter   CPU/it
		1   2.03994277e+03  3.77e-03       36     9.39
		2   2.03999424e+03  2.32e-03       33     9.55
		3   2.03999941e+03  2.50e-04       20     7.71
		4   2.03999992e+03  3.84e-05        9     5.80
		5   2.03999999e+03  5.96e-06        8     5.54
		*** Total CG iterations:    106
		*** Total CPU time:         58.24 seconds
		\end{verbatim}
	}
	\caption{Printout of low-precision ADMM and warm-started Loraine for the weighted \texttt{MAXCUT-50}.}
	\label{fig:MAXCUT_hyb}
\end{figure}

Next we will try to solve the unweighted MAXCUT problems. Recall that these problems often have nonunique solutions, in particular problems with almost dense graphs.
Moreover, relaxation order $\omega=2$ may not be high enough to obtain an exact solution.
We thus cannot expect the preconditioner to be efficient in those cases. This is, indeed, demonstrated in \Cref{tab:MAXCUT_un}. This table, in addition to columns identical to \Cref{tab:MAXCUT}, shows also the rank of the solution of the order-2 relaxation. As expected, Loraine is less efficient for problems with higher solution rank, in particular problems \texttt{MAXCUT-30} and \texttt{MAXCUT-50}. Still, it can solve these problems reliably. Again, the hybrid method is superior for these problems.
The convergence behaviour of the algorithms and their estimated complexity are further illustrated in \Cref{fig:1a}.
\begin{table}[htbp]
	\centering
	\caption{Loraine, MOSEK, ADMM and ADMM-Loraine in unweighted \texttt{MAXCUT-<n>} problems, relaxation order $\omega=2$}
	\label{tab:MAXCUT_un}%
	{\footnotesize
		\begin{tabular}{c|rrr|r|rr|rrrr|r}
			\toprule
			\multicolumn{1}{c|}{{\sc maxcut}} & \multicolumn{3}{c|}{Loraine for \cref{eq:BQPSDP2pen2}} & \multicolumn{1}{c|}{{\sc mosek} \cref{eq:BQPSDPrel1}} & \multicolumn{2}{c|}{{\sc admm} for \cref{eq:BQPSDP2pen2}}&\multicolumn{4}{c|}{{\sc admm}-Loraine for \cref{eq:BQPSDP2pen2}}&\\
			\multicolumn{1}{c|}{problem} & \multicolumn{1}{|r}{iter} & \multicolumn{1}{r}{CG it} & \multicolumn{1}{r|}{time}  & \multicolumn{1}{r|}{time} & \multicolumn{1}{r}   {iter} & \multicolumn{1}{r|}{time} 
			& \multicolumn{1}{r}{iter} & \multicolumn{1}{r}{timeA} & \multicolumn{1}{r}{timeL} & \multicolumn{1}{r|}{time}&\multicolumn{1}{r}{rank}
			\\\midrule
			20 & 17 &  735 &  4.3 &    9 & 1981 &    8 &  738+7 & 3.4 & 1.5 & 4.9 &  2 \\
			25 & 18 &  858 &   14 &   82 &  805 &    9 &  328+4 & 3.5 & 2.8 & 6.3 &  4 \\
			30 & 23 & 6482 &  145 &  635 & 2218 &   53 &  566+8 &  13 & 135 & 148 & 57 \\
			35 & 21 & 2604 &  128 & 3357 & 5251 &  229 & 1222+8 &  48 &  15 &  63 &  2 \\
			40 & 22 & 3538 &  305 &  mem & 7137 &  575 & 1855+7 & 142 &  35 & 177 &  2 \\
			45 & 22 & 3225 &  536 &      & 6794 &  845 & 1081+4 & 139 &  49 & 188 &  4 \\
			50 & 25 &12599 & 2593 &      & 5723 & 1162 & 1677+5 & 361 & 365 & 726 & 13 \\
			\bottomrule 
	\end{tabular}}%
\end{table}%
%
\begin{figure}[htbp]
	\begin{center}
		\resizebox{0.45\hsize}{!}
		{\includegraphics{{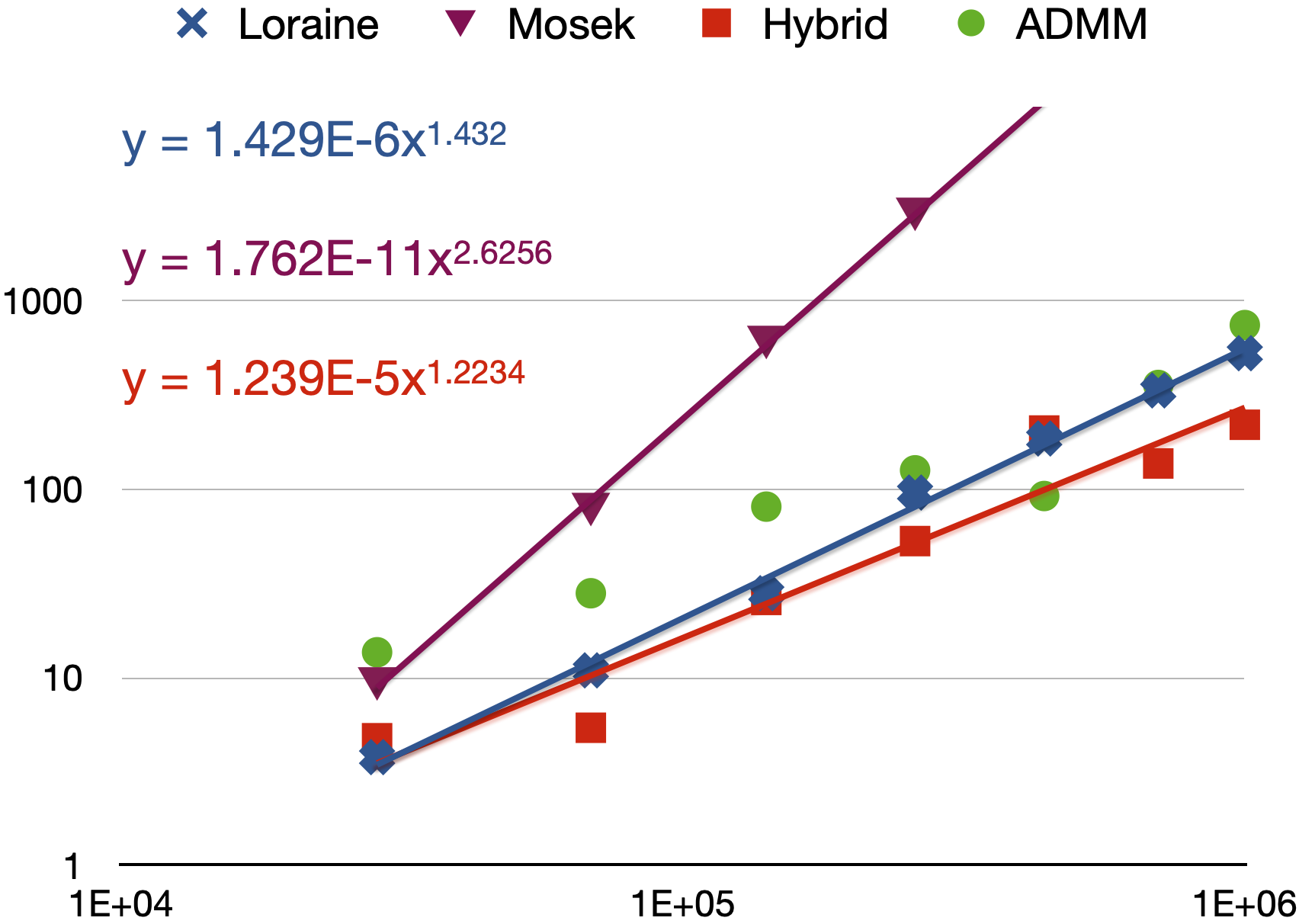}}}
		\resizebox{0.45\hsize}{!}
		{\includegraphics{{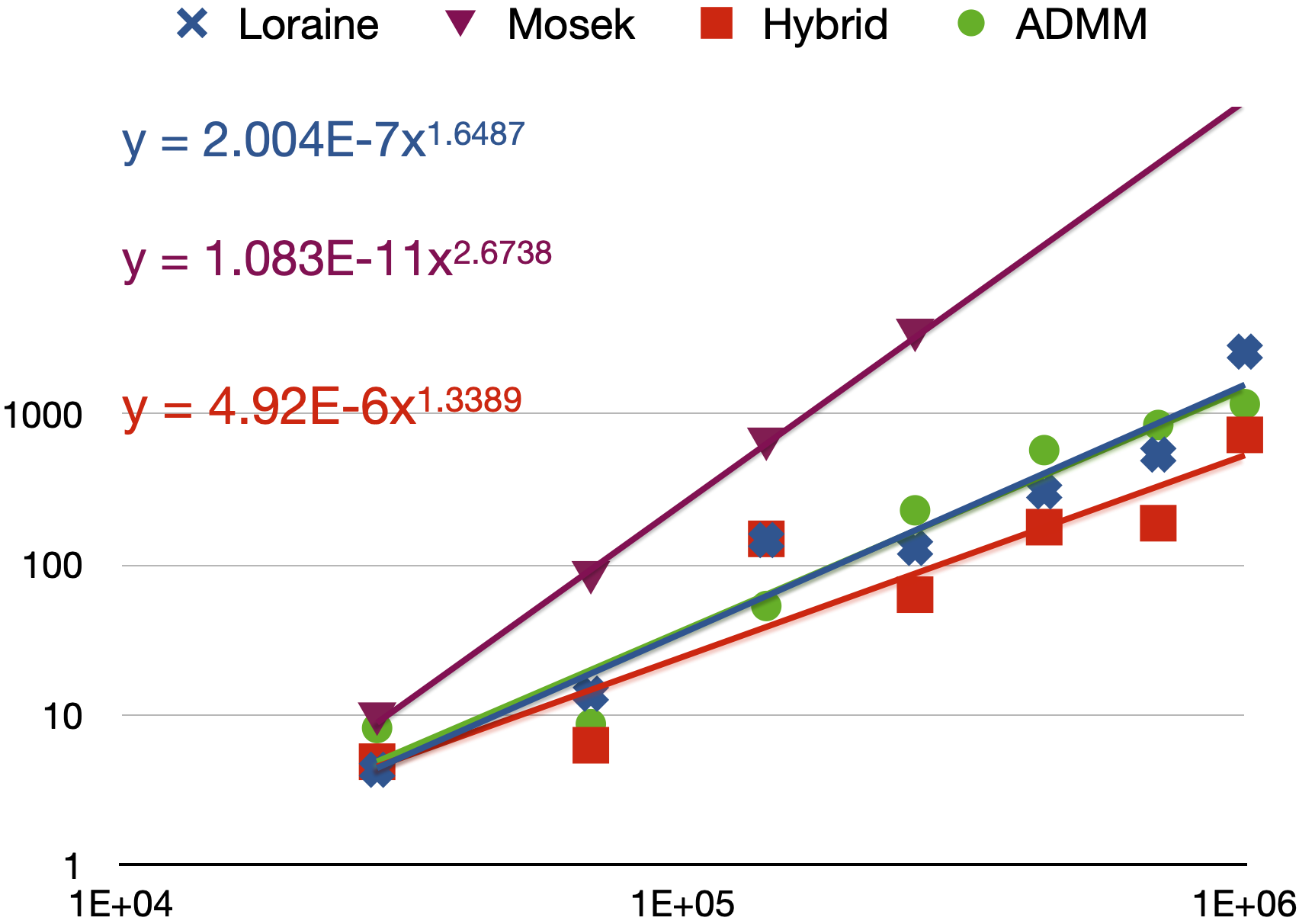}}}
	\end{center}
	\caption{CPU times in log-log scale for weighted (left) and unweighted (right) MAXCUT problems. Loraine (blue), MOSEK (brown), ADMM (green) and hybrid ADMM-Loraine (red).}
	\label{fig:1a}
\end{figure}

\subsection{Randomly generated problems}
We further conducted numerical experiments with matrices $Q$ randomly generated. We are aware of the fact that problems with random data may not always be representative and may sometimes lead to false conclusions regarding algorithm behaviour. However, we believe that these results still well demonstrate the efficiency of our approach.

It has been observed in \cite{kim-kojima} that problems with rank-one matrix $Q$ may require relaxation order of up to $\omega=\lceil s/2\rceil$ to reach the exact solution; a typical example is $Q=ee^{\!\top}$. On the other hand, for problems with matrix $Q$ of rank 3, $\omega=2$ was always sufficient in experiments performed in (\cite[Fig.4]{kim-kojima}). We have thus considered two main classes of problems:
\begin{itemize}
	\item[(A)] problems with ${\cal B}=\{-1,1\}$ and with $\rank Q=1$ generated by the following \mbox{MATLAB} code
	{\scriptsize
		\begin{verbatim}
		rng(0); q = randn(s,1); Q = q*q';
		\end{verbatim}}
	\item[(B)] problems with ${\cal B}=\{-1,1\}$ and with a full-rank indefinite $Q$ generated by the following MATLAB code
	{\scriptsize
		\begin{verbatim}
		rng(0); q = randn(s,1); Q = q*q';
		for k=1:s-1
		if ceil(k/2)*2 == k
		q = randn(s,1); Q = Q - q*q';
		else
		q = randn(s,1); Q = Q + q*q';
		end
		end
		\end{verbatim}}
\end{itemize}
Apart from random numbers generated with normal distribution, we also performed tests with uniform distribution (function \verb|rand|) and lognormal distribution (function \verb|logncdf|); in both cases the results and conclusions were rather similar to the above choice and are thus not reported here.

The exactness of the relaxation was measured by the numerical rank of the dual solution to the matrix inequality in problem \cref{eq:BQPSDP2pen2}---when the rank was equal to 1 or~2 (depending on the set ${\cal B}$) the relaxation order was considered sufficient; 
see \Cref{def:ex}.

\begin{remark}
	For $Q$ constructed as in (B), relaxation order $\omega=2$ was sufficient to get an exact solution of \cref{eq:BQP}. This observation, though, cannot be extended to any full-rank matrix $Q$. For instance, for $Q=ee^{\!\top}+\Diag(d), d\in\RR^s$, the lowest relaxation order will be $\omega=\lceil s/2\rceil$. This is because $x_i^2=1$ and thus the diagonal elements of $Q$ will be irrelevant in the optimization process and the resulting problem will be equivalent to that with a rank-one matrix $Q=ee^{\!\top}$. 
\end{remark}

\subsubsection{Full-rank $Q$ and relaxation order $\omega = 2$}
Using the MATLAB code from point (B) above, we generated problems of growing dimension $s=10\ldots50$ and solved the corresponding SDP relaxations for order $\omega=2$. 
The dimensions of the generated problems are reported in \Cref{tab:bqp_problems}; the table shows problem sizes for the original SDP relaxation \cref{eq:BQPSDPrel1} and for the re-written problem \cref{eq:BQPSDP2pen2}.
\begin{table}[htbp]
	\centering
	\caption{Randomly generated UBQP problems, relaxation order $\omega=2$: dimensions for formulations \cref{eq:BQPSDPrel1} and \cref{eq:BQPSDP2pen2}}
	\label{tab:bqp_problems}
	{\footnotesize
		\begin{tabular}{c|rr|rrr}\toprule
			&\multicolumn{2}{c|}{problem \cref{eq:BQPSDPrel1}}&		\multicolumn{3}{c}{problem \cref{eq:BQPSDP2pen2}}		\\
			UBQP size &	variables &	matrix size &	variables &	matrix size &	lin. constraints\\\midrule
			10 &	385 &	56 &	1596 &	56 &	770\\
			15 &	1\,940   &	   121 &	  7\,381 &	   121 &	3\,880\\
			20 &	6\,195   &	   211 &	 22\,366 &	   211 &	12\,390\\
			25 &	15\,275  &	   326 &	 53\,301 &	   326 &	30\,550\\
			30 &	31\,930  &	   466 &	108\,811 &	   466 &	63\,860\\
			35 &	59\,535  &	   631 &	199\,396 &	   631 &	119\,070\\
			40 &	102\,090 &	   821 &	337\,431 &	   821 &	204\,180\\
			45 &	164\,220 &	1\,036 &	537\,166 &	1\,036 &	328\,440\\
			50 &	251\,175 &	1\,276 &	814\,726 &	1\,276 &	502\,350\\
			\bottomrule
	\end{tabular}}
\end{table}
The computational results are presented in \Cref{tab:bqp_res1} and clearly demonstrate the efficiency of Loraine. The computational complexity of Loraine (applied to \cref{eq:BQPSDP2pen2}) and MOSEK and ADMM (applied to \cref{eq:BQPSDPrel1}) is further illustrated in \Cref{fig:1}. We do not report on the hybrid ADMM-Loraine algorithm. That is because ADMM {applied to problem} \cref{eq:BQPSDP2pen2} (as required by the hybrid method) appears to be much less efficient than for the MAXCUT problems and so the hybrid algorithm is not even competitive to ADMM applied to~\cref{eq:BQPSDPrel1}.
\begin{table}[htbp]
	\centering
	\caption{Randomly generated UBQP problems, relaxation order $\omega=2$: Loraine, MOSEK and ADMM}
	\label{tab:bqp_res1}
	{\footnotesize
		\begin{tabular}{c|rrr|rr|rr}\toprule
			&		\multicolumn{3}{c|}{Loraine for \cref{eq:BQPSDP2pen2}}& \multicolumn{2}{c|}{{\sc mosek} for \cref{eq:BQPSDPrel1}}	&\multicolumn{2}{c}{{\sc admm} for \cref{eq:BQPSDPrel1}}\\
			UBQP   size &  iter &	CG iter &	time & iter & time&	iter & time \\\midrule
			10 &	 17 &	178 &	0.2 & 6 & 0.2&  1006&  0.4\\
			15 &	 17 &	291 &	1.1 & 5 & 1.1&  1966&  3.0\\
			20 &	 18 &	447 &	4.0 & 6 & 9.3&  3063&  12\\
			25 &	 18 &	501 &	10 & 7 & 81 &  5843&   63\\
			30 &	 19 &	577 &	 22 & 7 & 496&  8018&  174\\
			35 &	 21 &	650 &	 46 &  \multicolumn{2}{c}{memory}& 11493&  511\\
			40 &	 23 &	1057 &	121 &   &    & 17532&  1318\\
			45 &	 22 &	2368 &	334 &   &    & 45030& 4258\\
			50 &	 21 &  1809 &	410 &   &    & 21407& 3316\\
			\bottomrule
		\end{tabular}
	}
\end{table}
%
\begin{figure}[htbp]
	\begin{center}
		\resizebox{0.45\hsize}{!}
		{\includegraphics{{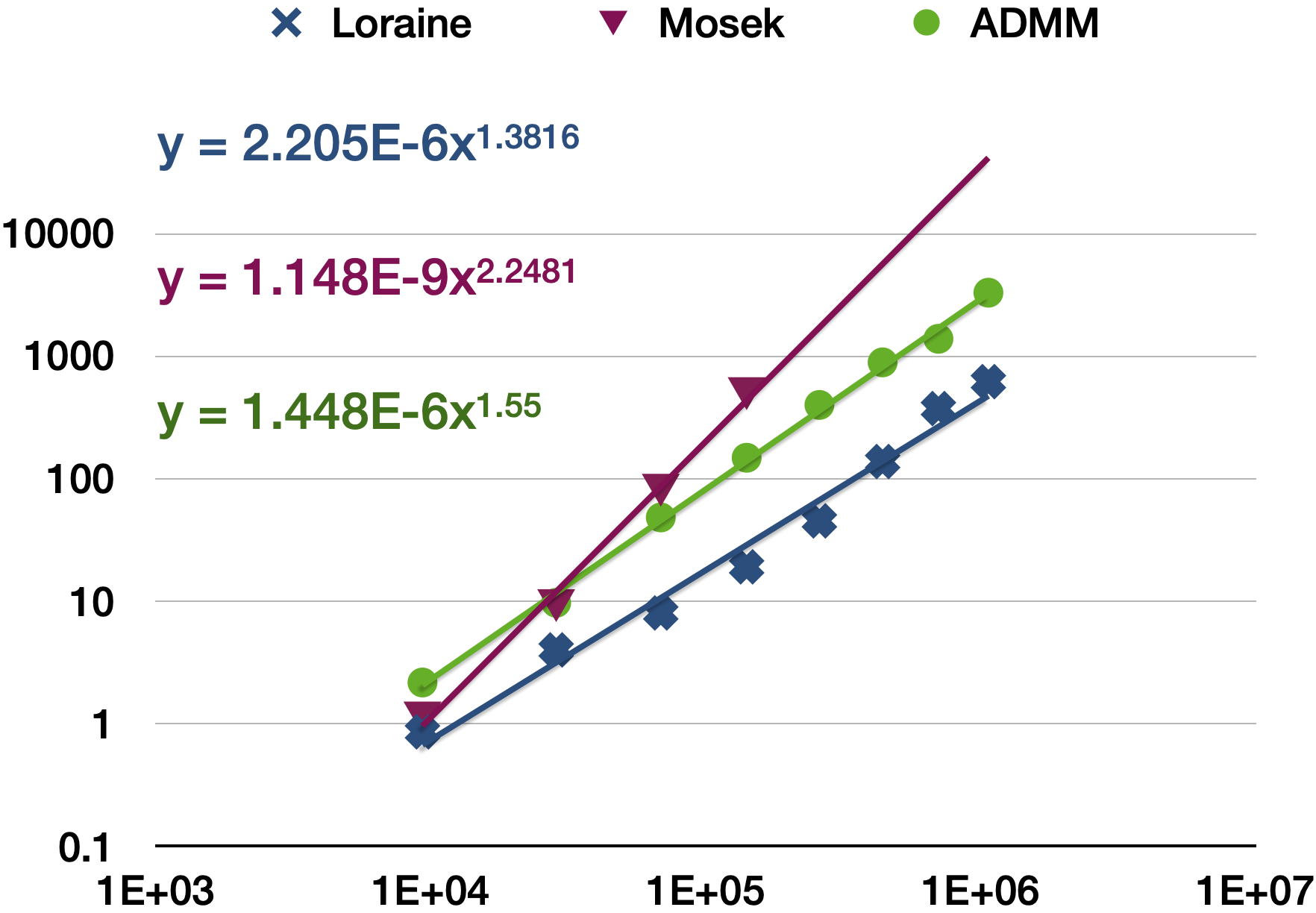}}}
	\end{center}
	\caption{CPU times in log-log scale for random BQP problems. Loraine (blue), MOSEK (brown) and ADMM (green).}
	\label{fig:1}
\end{figure}

\subsubsection{Rank-one $Q$ and higher relaxation order}
We know that, for these problems, relaxation order $\omega=2$ is typically not high enough to reach exact solution; order up to $\omega=\lceil s/2\rceil$ will be required.
Let us recall from \Cref{tab:sisi} the dependence of problem sizes on the relaxation order.
%
This table, in particular, shows that the number of variables grows much more quickly for problem \cref{eq:BQPSDP2pen2} than for the original problem \cref{eq:BQPSDPrel1} where it, eventually, reaches the finite limit $2^s-1$. Therefore we cannot expect Loraine with iterative solver applied to \cref{eq:BQPSDP2pen2} to be as efficient for higher-order relaxations as it is for \mbox{$\omega=2$}. This is clearly demonstrated in \Cref{tab:bqp_res2} comparing Loraine with a direct solver applied to problem \cref{eq:BQPSDPrel1} (this code was slightly faster than MOSEK for these problems, due to direct handling of the rank-one data matrices) with Loraine with the iterative solver applied to problem~\cref{eq:BQPSDP2pen2}. 
\begin{table}[htbp]
	\centering
	\caption{Randomly generated UBQP problem, $n=9$, relaxation order $\omega=2,\ldots,5$: problem dimensions and CPU times for Loraine-direct and Loraine-iterative}
	\label{tab:bqp_res2}
	{\footnotesize
		\begin{tabular}{c|rrr|rrrrr}\toprule
			&\multicolumn{3}{c|}{Loraine-direct for \cref{eq:BQPSDPrel1}}&		\multicolumn{5}{c}{Loraine-iterative for \cref{eq:BQPSDP2pen2}}		\\
			$\omega$ &	vars &	matrix size &	time  &	vars &	matrix size&	lin constr &	CG iter  &	time\\\midrule
			2  &	255 &	46 &	0.09&	1081 &	46 &	510 &	99 &	0.08\\
			3  &	465 &	130 &	0.77&	8515 &	130 &	930 &	203 &	0.97\\
			4  &	510 &	256 &	2.87&	32896 &	256 &	1020 &	302 &	15.7\\
			5  &	511 &	382 &	7.04&	73153 &	382 &	1022 &	701 &	129 \\
			\bottomrule
		\end{tabular}
	}
\end{table}



\section{Conclusions}
Our numerical experiments demonstrate the ability of an interior point method with a specially designed solver of the linear system to solve higher-order Lasserre relaxations of UBQP. The approach is particularly efficient for relaxation order two which, for most of the tested problems, was high enough to deliver either the exact solution of the UBQP or a good approximation of it. 

We have also introduced a new, hybrid algorithm that uses an approximate solution obtained by ADMM as a warm start for the used interior point method. This algorithm is rather efficient for problems for which ADMM itself is relatively efficient, such as the MAXCUT problems. On the other hand, it may be  inefficient once ADMM fails to converge quickly to an approximate solution.

\backmatter

\bmhead{Acknowledgments}
The work of the second author was co-funded by the European Union under the project ROBOPROX (reg.\ no.\  CZ.02.01.01/00/22\_008/0004590).

\section*{Declarations}

\noindent{\bf Conflict of interest.} The author has no competing interests to declare that are relevant to the content of this article.

\noindent{\bf Data availability. } The data used in Section \ref{sec:examples} were randomly generated and are available from the authors upon request.

\bibliographystyle{abbrv}
\bibliography{bibliography}

\end{document}